%% file: main.tex
\documentclass[a4paper,11pt]{article}
\usepackage[utf8]{inputenc}
\usepackage[T1]{fontenc}
\usepackage[english]{babel}
\usepackage{amsmath,amssymb,amsthm}
\usepackage[short]{optidef}				% Package for linear programming
\usepackage[shortlabels]{enumitem} 
\usepackage{comment}
\usepackage{standalone}
\usepackage{mathtools}
\usepackage{fullpage}
\usepackage[dvipsnames]{xcolor}
\usepackage{float}
\usepackage{mathdots}
\usepackage{tikz}
\usepackage[subrefformat=parens,labelformat=parens]{subcaption}
\graphicspath{{./Figuras/}} %%%% path for figures
\usetikzlibrary{graphs,graphs.standard}
\usepackage{hyperref}
\usepackage{thmtools}
\usepackage{thm-restate}
\tikzstyle{line} = [draw, thick]

\usepackage{graphicx}
\newtheorem*{citedtheorem}{Theorem}

\definecolor{myblue}{RGB}{80,80,160}
\definecolor{mygreen}{RGB}{80,160,80}
\definecolor{myred}{RGB}{255,0,0}
\definecolor{mybrown}{RGB}{139,69,19}

\newtheorem{theorem}             {Theorem}
\newtheorem{lemma}     	[theorem] {Lemma}

\newtheorem{corollary}	[theorem] {Corollary}
     
\newtheorem{claim}{Claim}

\newtheoremstyle{case}{}{}{}{}{\bfseries}{:}{ }{}
\theoremstyle{case}

\numberwithin{subcase}{case}

\begin{document}

\title{Distance-$k$ Domination Number in Triangular Matchstick Graphs}

\author{
Juan Gutiérrez\thanks{This work was partially supported by Fondo Semilla UTEC 2025.}
\and
Jorge Neira\footnotemark[1]
}
\date{}
\maketitle

\begin{center}
\footnotesize
Department of Computer Science\\
University of Engineering and Technology (UTEC), Lima, Peru\\
E-mail: \texttt{\{jgutierreza,jorge.neira\}@utec.edu.pe}
\end{center}

\input{sections/abstract}
\input{sections/intro}

\input{sections/1_correction/reason}

\input{sections/2_generalization/generalization}

\input{sections/3_improvement/improvement}

\input{sections/4_radius/radius}

\newpage

\bibliographystyle{plain}
\bibliography{referencias}

\end{document}

%% file: sections/abstract.tex
\begin{abstract}
A distance-$k$ dominating set of a graph is a set of vertices such that every vertex lies within distance $k$ of some vertex in the set; its minimum size is the distance-$k$ domination number $\gamma_k$. 
We study $\gamma_k$ for triangular matchstick graphs $T_d$.
Harris et al. (2020) claimed two upper bounds for the cases $k=1$ and $k=2$.
These bounds are shown to be incorrect and are replaced by a corrected general upper bound for $\gamma_k(T_d)$ obtained by refining their tiling method.
For $k=1$ and $d=7q+\beta$, where $0\leq \beta\leq 6$, we further sharpen this construction and prove
$\gamma(T_{7q+\beta})\leq 3.5q^2+(\beta+4.5)q+(\beta-1)$.
Finally, we determine the radius of $T_d$ as $r_d=\left\lceil 2d/3\right\rceil$, which implies $\gamma_k(T_d)=1$ whenever $k\geq r_d$.
\end{abstract}

\par\noindent\textbf{Keywords:} Domination; Distance domination; Triangular matchstick graphs; Triangular lattice; Graph tilings; Graph radius

%% file: sections/intro.tex
\section{Introduction and Preliminaries}

We consider simple connected graphs and use standard terminology \cite{bondy_graph_2008, diestel_graph_2017}. 
For vertices $u$ and $v$, let $dist(u,v)$ denote their distance. A set $D \subseteq V$ is a \emph{distance-$k$ dominating set} if every vertex is at distance at most $k$ from some vertex of $D$. The minimum size of such a set is the \emph{distance-$k$ domination number} $\gamma_k(G)$.
For $k=1$, $\gamma_1(G)$ is the classical domination number $\gamma(G)$, a parameter studied extensively in the literature \cite{haynes_topics_2020, slater_r_1976}.

Domination and distance-$k$ domination have been extensively studied in planar graphs. Matheson and Tarjan conjectured that sufficiently large triangulated planar graphs satisfy $\gamma(G)\le n/4$. In particular, this bound holds for triangular matchstick graphs \cite{hongo_dominating_2013}.
Several additional results concerning this conjecture have been established 
\cite{christiansen_2024, king_pelsmajer_2010, matheson_dominating_1996, plummer_zha_2016, spacapan_domination_2020}.

The \textit{triangular lattice} $T_\infty$ is the infinite graph whose vertex set is
\[
(x,y)=\left(a-\frac{b}{2},\;\frac{b\sqrt{3}}{2}\right), \quad a,b\in\mathbb{Z},
\]
where two vertices are adjacent if their distance is $1$.
A \textit{triangular matchstick graph} $T_d$ is the subgraph of $T_\infty$ induced by vertices with 
$0\le b\le a\le d$. Figure~\ref{fig:mt:graph_tri} illustrates $T_d$ for $0\le d\le3$.
Harris et al. stated upper bounds for the distance-$k$ domination number of triangular matchstick graphs, for $k \in \{1,2\}$, using a periodic pattern on the triangular lattice. Their Theorems~4.2 and~4.3 state the following bounds.
\begin{figure}[ht]
  \centering

  \begin{subfigure}[b]{0.18\textwidth}
    \centering
    \includegraphics{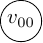}
    \caption{$T_0$.}
  \end{subfigure}
  \hfill
  \begin{subfigure}[b]{0.18\textwidth}
    \centering
    \includegraphics{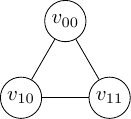}
    \caption{$T_1$.}
  \end{subfigure}
  \hfill
  \begin{subfigure}[b]{0.25\textwidth}
    \centering
    \includegraphics{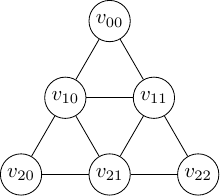}
    \caption{$T_2$.}
  \end{subfigure}
  \hfill
  \begin{subfigure}[b]{0.35\textwidth}
    \centering
    \includegraphics{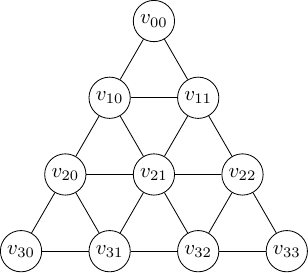}
    \caption{$T_3$.}
  \end{subfigure}

  \caption{$T_d$ for $0 \leq d \leq 3$.}
  \label{fig:mt:graph_tri}
\end{figure}

\begin{citedtheorem}[\cite{harris_2020}, Theorem~4.2]
If $d = 7q + \beta$, with $0 \le \beta \le 6$, then
\[
\gamma(T_d) \le
\begin{cases}
3q(q+2), & \text{if } \beta = 0,\\
3(q+1)(q+3), & \text{if } \beta \neq 0.
\end{cases}
\]
\end{citedtheorem}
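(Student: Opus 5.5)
The natural route to this bound is the tiling argument of Harris et al. Recall that $T_\infty$ has a perfect dominating set of density $1/7$: the set $C=\{(a,b): a+2b\equiv 0 \pmod 7\}$ (or an equivalent linear congruence). Each closed neighbourhood of a vertex of $C$ is a hexagon of $7$ vertices, and these hexagons tile the lattice. The plan would be as follows.
\begin{enumerate}[(i)]
\item Intersect a suitable translate of $C$ with $T_d$.
\item Count the codewords lying in $T_d$, which is roughly $|V(T_d)|/7=(d+1)(d+2)/14$.
\item Add one extra vertex for each boundary vertex of $T_d$ whose hexagon centre falls outside $T_d$. These vertices lie along the three sides, and there are $O(d)$ of them.
\item For $d=7q+\beta$, sum these two contributions and compare the result with $3q(q+2)$ when $\beta=0$ and with $3(q+1)(q+3)$ when $\beta\neq 0$.
\end{enumerate}
Step~(iii) is the delicate one. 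The three sides of $T_d$ meet the pattern $C$ in different residues, so one has to choose the translate carefully and handle the three corners separately.

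Before investing in that bookkeeping, I would first test the statement on small values of $d$, because the boundary correction in step~(iii) is exactly where such an argument is most likely to break. The smallest case already fails. For $d=0$ we have $q=\beta=0$, and $T_0$ is a single vertex, so $\gamma(T_0)=1$. The claimed bound, however, gives $3\cdot 0\cdot(0+2)=0$. So the statement, taken literally, is false.

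Beyond this trivial case, I would compare the quadratic term directly. Each dominating vertex covers at most $7$ vertices, and the boundary vertices are covered inefficiently, since a dominating vertex on a side of $T_d$ covers at most $4$ vertices and one at a corner covers at most $3$. A counting argument of the form $\sum_{v\in D}|N[v]\cap V(T_d)|\ge |V(T_d)|$, with the boundary losses made explicit, should therefore produce further values of $q$ and $\beta$ for which $\gamma(T_d)$ exceeds the stated formula. I would check such candidates by an exhaustive or ILP computation for small $d$, for instance $d\le 14$.

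The main obstacle is therefore not proving the stated inequality. It is pinning down exactly which boundary accounting in the tiling argument is wrong. This is what the rest of the paper should do: redo step~(iii) correctly, obtain a valid corrected upper bound for general $\gamma_k(T_d)$, and then sharpen the construction for $k=1$.
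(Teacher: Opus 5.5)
Your verdict is right: the statement is false, as the paper also shows. And $d=0$ is a valid literal counterexample, since $\gamma(T_0)=1>0=3\cdot 0\cdot(0+2)$. But it is a degenerate one. It disappears as soon as the one-vertex graph is excluded, for example by the hypothesis $q\ge 1$ that the paper imposes in its corrected Theorems~\ref{thm:general-upper-bound} and~\ref{teorema:upperbound_k1}. Under that reading your proposal contains no refutation, only a plan, and the plan would come up empty.

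On the range you suggest searching, $1\le d\le 14$, the stated bound actually holds:
\begin{itemize}
\item For $1\le d\le 6$ the claimed value is $9$, and nine vertices easily dominate $T_d$.
\item For $7\le d\le 14$ the stated bound coincides with the corrected bound of Theorem~\ref{thm:general-upper-bound} with $k=1$, because
\[
\frac{7q^2+9q+2}{2}-3q(q+2)=\frac{(q-1)(q-2)}{2},
\]
which vanishes for $q\le 2$. Theorem~\ref{teorema:upperbound_k1} is smaller still there, e.g.\ $\gamma(T_{14})\le 22<24$.
\end{itemize}

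What you are missing is an asymptotic comparison, and it needs no boundary losses at all. The inequality you already wrote, $\sum_{v\in D}|N[v]\cap V(T_d)|\ge |V(T_d)|$ with $|N[v]|\le 7$, gives
\[
\gamma(T_{7q})\ \ge\ \frac{(7q+1)(7q+2)}{14}=3.5q^2+1.5q+\tfrac{1}{7}.
\]
Its leading coefficient $3.5$ beats the $3$ in $3q(q+2)$. This lower bound exceeds the claimed bound for every $q\ge 9$; at $d=63$ it gives $\lceil 65\cdot 64/14\rceil=298>297=3\cdot 9\cdot 11$. This is the comparison the paper uses, and it survives any reasonable reading of the hypotheses. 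A minor slip as well: a non-corner side vertex has a closed neighbourhood of size $5$, not $4$.

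Your guess about where the tiling argument breaks is also off. The error is not in boundary bookkeeping like your step~(iii) but in the interior inclusion--exclusion. Harris et al.\ cover $T_{7q}$ by $q^2$ copies of $T_7$, count $9$ pattern vertices per copy, and subtract $4$ for each of the $3q(q-1)/2$ shared internal sides. A pattern vertex at an interior point where six copies meet is counted six times and also subtracted six times, so it vanishes from the count entirely. There are $I_q=(q-1)(q-2)/2$ such points, which is exactly the discrepancy displayed above.

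The first such point occurs in $T_{21}$, where the construction carried out consistently uses $46$ vertices rather than $45$. However, $\gamma(T_{21})\le 44$ by Theorem~\ref{teorema:upperbound_k1}. So that example exposes the flaw in the proof, not a failure of the bound. The bound itself is refuted by the density argument above.
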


\begin{citedtheorem}[\cite{harris_2020}, Theorem~4.3]
If $d = 19q + \beta$, with $0 \le \beta \le 18$, then
\[
\gamma_2(T_d) \le
\begin{cases}
9q(q+1), & \text{if } \beta = 0,\\
9(q+1)(q+2), & \text{if } \beta \neq 0.
\end{cases}
\]
\end{citedtheorem}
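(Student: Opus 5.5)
The plan is to \emph{disprove} this statement rather than prove it. The introduction announces that the bounds of Harris et al.\ are incorrect. A density heuristic also predicts failure: the optimal pattern on $T_\infty$ is the perfect distance-$2$ code, whose balls have $19$ vertices. It has density $1/19$. Since $T_{19q}$ has about $(19q)^2/2$ vertices, any distance-$2$ dominating set should need about $9.5q^2$ vertices, which exceeds $9q(q+1)$ for large $q$. So I would turn this heuristic into a rigorous counting lower bound and exhibit explicit $q$ for which the claimed inequality fails.

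\textbf{Step 1 (ball sizes).} In $T_\infty$ every vertex has degree $6$. The closed ball of radius $2$ around a vertex consists of the vertex, its $6$ neighbours and the $12$ vertices at distance exactly $2$, so it has $19$ vertices. Since $T_d$ is an induced subgraph of $T_\infty$, distances in $T_d$ are at least those in $T_\infty$. Hence every radius-$2$ ball in $T_d$ has at most $19$ vertices.

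\textbf{Step 2 (counting).} By definition, $T_d$ has $\sum_{a=0}^{d}(a+1)=(d+1)(d+2)/2$ vertices. A distance-$2$ dominating set $D$ covers all of them by balls of size at most $19$. Therefore
\[
\gamma_2(T_d)\ \ge\ \frac{(d+1)(d+2)}{38}.
\]

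\textbf{Step 3 (comparison).} Take $\beta=0$ and $d=19q$, so the claimed bound is $9q(q+1)$. We have
\[
\frac{(19q+1)(19q+2)}{38}>9q(q+1)
\iff 361q^2+57q+2>342q^2+342q
\iff 19q^2-285q+2>0.
\]
The last inequality holds for all $q\ge 15$. For instance, at $q=15$ it reads $4275-4275+2=2>0$. Hence $\gamma_2(T_{19q})>9q(q+1)$ for every $q\ge15$, and the stated bound is false.

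The case $\beta\neq0$ fails in the same way. Its bound is $9(q+1)(q+2)=9q^2+O(q)$, while the counting bound for $d=19q+\beta$ is $9.5q^2+O(q)$, so it is also violated for large $q$. An analogous computation with balls of size $7$ refutes the preceding Theorem~4.2: there $3q(q+2)=3q^2+O(q)$, while $(7q+1)(7q+2)/14=3.5q^2+O(q)$.

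The only delicate point is Step~1. One must make sure that restricting to the induced subgraph can only shrink balls and never enlarge them, which follows because every path in $T_d$ is also a path in $T_\infty$. The rest is arithmetic. Conceptually, the error in the original argument is presumably a tiling count that undercounts the tiles needed to cover the triangle. That would be the place to look when replacing the statement with a corrected bound.
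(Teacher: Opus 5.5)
Your disproof is correct and is essentially the paper's own: the paper also compares the counting lower bound $\lceil (d+1)(d+2)/38\rceil$ with $9q(q+1)$ and obtains the violation at the same threshold $q=15$, i.e.\ $d=285$, where $2161>2160$. The paper cites this lower bound as Lemma~3.1 of Harris et al., whereas you derive it directly from the $19$-vertex balls. One correction to your closing speculation: the paper traces the error not to undercounting tiles but to the double-counting correction, which subtracts a pattern vertex once per incident shared side and therefore deletes entirely the vertices where six copies of $T_{19}$ meet (first occurring in $T_{57}$).
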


However, these upper bounds are shown here not to hold. 
The proofs of these statements overlook vertices of degree~$6$ at intersections of the triangular copies used in the tiling, leading to incorrect double-counting adjustments.
As a consequence, the resulting upper bounds do not hold for $T_d$.
This is further evidenced by the lower bound 
given in the same paper \cite[Lemma~3.1]{harris_2020}, which asymptotically exceeds those upper bounds for large $d$. 

In this work, we study $\gamma_k$ for triangular matchstick graphs. 
We begin by discussing the tiling-based upper-bound argument for triangular matchstick graphs in Section~\ref{sec:harris-discussion}. After that, in Section~\ref{sec:generalization}, 
we generalize the corrected construction to distance-$k$ domination for every $k\geq 1$.
    
\begin{restatable}{theorem}{general}\label{thm:general-upper-bound}
Let $k\geq 1$ and let $D_k=3k^2+3k+1$. If $d=D_kq+\beta$, with $q\geq 1$ and $0\leq \beta \leq D_k-1$, then
\[
\gamma_k(T_d)
\leq
\begin{cases}
\dfrac{D_kq^2+(6k+3)q+2}{2}, & \text{if } \beta=0,\\[6pt]
\dfrac{D_k(q+1)^2+(6k+3)(q+1)+2}{2}, & \text{if } \beta>0.
\end{cases}
\]
\end{restatable}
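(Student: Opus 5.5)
We use the perfect distance-$k$ code of the triangular lattice and count centers in $T_d$ triangle by triangle, in the style of Harris et al.

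\textbf{The lattice code.} In $T_\infty$ the ball of radius $k$ around a vertex is a hexagon with exactly $D_k=3k^2+3k+1$ vertices. There is a sublattice $\Lambda\subseteq T_\infty$ of index $D_k$ whose radius-$k$ balls tile the plane; it is generated by the translation vector $(k+1,k)$ in lattice coordinates together with its $60^\circ$ rotation. Every vertex of $T_\infty$ is within distance $k$ of exactly one point of $\Lambda$. In this tiling, $T_{D_k}$ is a fundamental triangle: its three corners lie on a translate of $\Lambda$.

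\textbf{Reduction to $\beta=0$.} For $\beta=0$ we write $T_d$, $d=D_kq$, as the union of $q^2$ triangles. There are $q(q+1)/2$ upward copies of $T_{D_k}$ and $q(q-1)/2$ downward copies, and they share boundary segments. For $\beta>0$ we embed $T_d$ in $T_{D_k(q+1)}$. We take the same centers and project any center lying outside $T_d$ to its nearest vertex of $T_d$. Projection onto a convex lattice triangle does not increase lattice distance to points inside it, so the projected set still dominates $T_d$. Hence the $\beta>0$ bound is the $\beta=0$ bound with $q$ replaced by $q+1$.

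\textbf{Counting for $\beta=0$.} We choose a translate of $\Lambda$ containing the three corners of $T_d$. The dominating set is $D=\Lambda\cap T_d$, together with a few extra vertices near the sides that cover boundary vertices whose nearest code point lies outside $T_d$. We count $|\Lambda\cap T_d|$ by Pick-type reasoning. $T_d$ has area equal to $q^2$ fundamental parallelograms of $\Lambda$ halved, that is, $q^2/2$ fundamental cells measured in units of $D_k$ lattice triangles. Each side of $T_d$ carries $q+1$ points of $\Lambda$. Pick's formula for the lattice $\Lambda$ then gives
\[
|\Lambda\cap T_d| = \frac{D_kq^2}{2}\cdot\frac{1}{D_k}\cdot(\ldots).
\]
More concretely, we count interior points, boundary points on the three sides, and corner points separately, as Harris et al.\ do. The key correction is that degree-$6$ vertices where six small triangles meet must be counted once, not subtracted repeatedly. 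The contributions combine to $(D_kq^2+(6k+3)q+2)/2$. The quadratic term comes from area: about $D_kq^2/2$ lattice triangles, each of which needs half a center on average after sharing. The linear term $(6k+3)q/2$ accounts for the boundary. Along each of the three sides, each period of length $D_k$ needs roughly $k+\tfrac12$ extra centers, because boundary balls are cut in half. The constant $1$ fixes the corners.

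\textbf{Main obstacle.} The hard part is the exact bookkeeping of the boundary. We must check that along each side of $T_{D_k}$ the truncated hexagons leave uncovered exactly the vertices counted by the linear term. We must also verify that the shared vertices at intersections of copies are counted exactly once, which is precisely the error in Harris et al. We will first verify the count by direct construction for $q=1$, where the bound is $(D_k+6k+5)/2$. Then we prove that adding one row of triangles, that is, $q\to q+1$, adds exactly
\[
\frac{D_k(2q+1)+6k+3}{2}
\]
centers, and conclude by induction on $q$.
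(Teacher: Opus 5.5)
Your reduction for $\beta>0$ (embed $T_d$ in $T_{D_k(q+1)}$ and move each exterior center to a nearest vertex of $T_d$) is fine. The set you propose for $\beta=0$ is also the right one: all points of the perfect code $\Lambda$ within distance $k$ of $T_d$, with exterior ones moved inside. Counting that set on the whole triangle at once would even avoid the paper's copy-by-copy inclusion--exclusion, with its shared-side term and six-copy correction $I_q$.

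However, the count, which is the whole content of the theorem, is never carried out. Moreover, the one geometric fact you feed into Pick's formula is false: $T_{D_k}$ is \emph{not} a fundamental triangle of $\Lambda$. Its sides have length $D_k$, while the minimal vectors of $\Lambda$ have length $\sqrt{D_k}$. So $T_{D_k}$ is made of $D_k$ elementary $\Lambda$-triangles and contains $(D_k+5)/2$ code points (six for $k=1$), not three. Likewise $T_{D_kq}$ has area $D_kq^2/2$ fundamental cells of $\Lambda$, not $q^2/2$; with your value, Pick would return only the $(q+1)(q+2)/2$ corner points of the decomposition. The remaining steps do not amount to a derivation:
the display ending in ``$(\ldots)$'';
the heuristic of $D_kq^2/2$ triangles each needing half a center, which gives $D_kq^2/4$;
the ``half-balls'' explanation of $k+\tfrac12$ per period;
an induction whose increment is just $f(q+1)-f(q)$ restated, with neither base case nor step proved.

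What is missing are two exact counts.

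\emph{First}, $m\alpha_1\in\Lambda$ iff $D_k\mid m$, and likewise in the other two directions by the rotational symmetry of $\Lambda$. So each side of $T_{D_kq}$ carries exactly $q+1$ code points, giving $B=3q$. There are $D_kq^2$ elementary $\Lambda$-triangles, so Pick's relation $2I+B-2=D_kq^2$ gives $|\Lambda\cap T_d|=(D_kq^2+3q+2)/2$.

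\emph{Second}, consider the row $b=-j$ ($1\le j\le k$) below $\ell_x$. The vertices in it within distance $k$ of $T_d$ are those with $-k\le a\le d+k-j$. Those with $a\le 0$ or $a\ge d$ lie within distance $k$ of the corner code points $(0,0)$ or $(d,0)$, so they are not code points. The $D_kq$ consecutive positions $1\le a\le d$ contain exactly $q$ code points. Hence there are exactly $kq$ exterior centers per side and $3kq$ in total.

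Adding the two counts gives $(D_kq^2+3q+2)/2+3kq$, which is precisely the claimed bound. Until these counts (or the paper's equivalent local counts $V_k^{\mathrm{in}}$, $V_k^{\mathrm{ex}}$, $B_k$, $I_q$) are established, the $\beta=0$ case is unproved, and with it the $\beta>0$ case.
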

       
For the case $k=1$, we present an improved upper bound in Section~\ref{sec:improvement}.

\begin{restatable}{theorem}{improvementkone}\label{teorema:upperbound_k1}
If $d = 7q + \beta$, where $q \geq 1$ and $0 \leq \beta \leq 6$, then
\[
\gamma(T_d) \leq 3.5q^2 + (\beta + 4.5)q + \beta - 1.
\]
\end{restatable}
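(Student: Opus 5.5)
The plan is an explicit construction: a dominating set built from the perfect code of $T_\infty$, restricted to $T_d$ and repaired near the boundary. The lattice $T_\infty$ has a perfect $1$-code: the vertices $(a,b)$ with $a+2b\equiv c\pmod 7$ for a fixed residue $c$. Every closed neighbourhood $N[v]$ in $T_\infty$ contains exactly one of them, since the seven vertices of $N[v]$ realise all seven residues. I will work in the coordinates $0\le b\le a\le d$ with $d=7q+\beta$.

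\emph{Step 1 (interior count).} Let $S_c$ be the set of code vertices lying in $T_d$. Every vertex of $T_d$ at distance at least $1$ from the boundary has its whole neighbourhood inside $T_d$, so $S_c$ dominates it. I count $|S_c|$ row by row. In row $b$, the admissible $a$ range over an interval of length $d-b+1$, and the code vertices in it form one residue class mod $7$. Summing over $b$ gives about $\frac{(d+1)(d+2)}{14}\approx \frac{49q^2}{14}=3.5q^2$, matching the leading term. The linear term will come from the exact floor/ceiling sum together with the boundary repairs.

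\emph{Step 2 (boundary repair).} I handle each of the three sides $b=0$, $a=d$ and $a=b$ separately. A boundary vertex is undominated by $S_c$ exactly when its code neighbour lies outside $T_d$. Along a side, such vertices occur in a periodic pattern of period $7$. Roughly $2$ or $3$ per period are missed, but one added vertex on the side, or one shifted code vertex just inside, covers up to three consecutive side vertices. This should cost about $q$ extra vertices per side, and about $4.5q$ in total once $c$ is chosen well. Instead of simply adding vertices, I will move code vertices near the boundary onto the boundary line, where each still covers its interior neighbours. I will also handle the three corners by hand.

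\emph{Step 3 (choice of $c$ and dependence on $\beta$).} I choose the residue $c$, possibly depending on $\beta$, to minimise the repair cost. Writing $d=7q+\beta$, the row sums and side counts pick up a term linear in $\beta$. This term should be exactly $\beta q+\beta-1$ beyond $3.5q^2+4.5q$. I will verify the formula on the base cases $q=1$ for each $\beta\in\{0,\dots,6\}$, and then check that increasing $q$ by $1$ adds exactly $7q+\beta+8$ vertices (the forward difference of the bound). This can be shown by decomposing $T_{d+7}$ as $T_d$ plus a strip of width $7$ along the side $a=d$. In that strip the periodic pattern is consistent and contributes the same count each time.

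\emph{Main obstacle.} The hardest part will be Step 2 combined with the induction in Step 3. The boundary and corner repairs interact with each other and with the $\beta$ residue. I also have to show that the strip added when passing from $T_d$ to $T_{d+7}$ can be dominated by exactly $7q+\beta+8$ vertices, given that the old boundary side $a=d$ becomes interior and some of its repair vertices become redundant. I would first try to write down the strip pattern explicitly, as a fixed width-$7$ template repeated along the new side plus a constant corner gadget, and prove its correctness by checking one period. The seven base cases I would verify by explicit pictures or computer check.
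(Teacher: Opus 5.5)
Your framework is sound and matches the paper's Phases~I--II in substance: the paper's ``translation'' of $T_d$ against $P_{2,1}$ is exactly your choice of residue $c$. The leading and linear terms also come out automatically. We have $|S_c|=3.5q^2+(\beta+1.5)q+O(1)$, and each side has exactly one pair of consecutive missed vertices per period, hence $q+O(1)$ repairs per side. The gap is that the whole content of the theorem is the constant $\beta-1$, and your proposal only asserts it (``should be exactly'').

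\textbf{The constant is not automatic.} Take $\beta=0$:
\begin{itemize}
\item $c=0$ gives $|S_0|=3.5q^2+1.5q+1$ with $q$ missed pairs per side.
\item Each $c\neq 0$ gives $|S_c|=3.5q^2+1.5q$ with $3q+1$ missed runs, because one side picks up an extra run at its corners.
\end{itemize}
So one repair per run yields $3.5q^2+4.5q+1$ for every $c$. That is just Theorem~\ref{thm:general-upper-bound}, and it is two vertices too many. You reach $-1$ only by merging runs at corners. For example, with $c=5$ the vertex $(1,1)$ covers both runs meeting at $(0,0)$, and $(7q,1)$ covers both runs meeting at $(7q,0)$; for $q=1$, $S_5\cup\{(1,1),(7,1)\}$ dominates $T_7$.

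Carrying out this bookkeeping for all $\beta$, such merges suffice for $\beta\le 4$ but not for $\beta\in\{5,6\}$:
\begin{itemize}
\item For $T_{12}$, every class $c$ has $13$ code vertices and $6$ missed runs, and at most one corner admits a merge. Your scheme therefore gives $18$, while the theorem needs $17$.
\item For $T_{13}$, your scheme gives at least $20$, while the theorem needs $19$.
\end{itemize}

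\textbf{The missing step.} What you need is a step that deletes code vertices near a corner and replaces a group of $x$ dominators by $y<x$ non-code vertices. This is exactly the paper's Phase~III, and nothing in your plan provides it. For instance, with $c=0$ you can trade four vertices for three:
\begin{itemize}
\item remove the corner $(0,0)$, the code vertex $(3,2)$, and the two repairs for $\{(2,0),(3,0)\}$ and $\{(4,4),(5,5)\}$;
\item add $\{(1,1),(3,1),(4,4)\}$.
\end{itemize}
Together with the merge at $(12,12)$, this gives $17$ for $T_{12}$.

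\textbf{Other problems.}
\begin{itemize}
\item The one local move you do describe is incorrect. Moving a code vertex $(a,1)$ to $(a,0)$ loses $(a-1,1)$, $(a,2)$ and $(a+1,2)$, and by perfectness no other code vertex dominates them.
\item Deferring the base cases to a computer search does not close the gap. Your strip is glued along $a=d$, whose endpoints are corners. Any corner replacements there must be dismantled and rebuilt at $(d+7,0)$ and $(d+7,d+7)$, and the increment $7q+\beta+8$ must then be re-verified with that in place.
\item The corner residues $0,\beta,3\beta$ do not depend on $q$. So a direct count for general $q$ (code, runs, corner merges, corner replacements) is simpler than the induction.
\end{itemize}
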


Finally, we determine the radius of triangular matchstick graphs and relate it to $\gamma_k$ in Section~\ref{sec:radius}, which allows us to precisely characterize when $\gamma_k(T_d)=1$.

\begin{restatable}{theorem}{radius}\label{teo:radio}
For every $d \ge 0$, the radius of $T_d$ is
$
\left\lceil \frac{2d}{3} \right\rceil.
$
\end{restatable}

%% file: sections/1_correction/reason.tex
\section{Discussion of the argument of Harris et al.}\label{sec:harris-discussion}
In this section, we demonstrate that the arguments in the proofs of \cite[Theorems~4.2 and~4.3]{harris_2020} are incorrect and identify the double-counting issue that must be corrected.

We denote by $\ell_x$, $\ell_y$, and $\ell_z$ the three sides of the outer triangular boundary of $T_d$, as illustrated in Figure~\ref{fig:mm:correccion1}. Specifically, $\ell_x$ denotes the horizontal base of $T_d$, while $\ell_y$ and~$\ell_z$ denote the right and left boundary sides, respectively.

Harris et al. based their constructions on a periodic pattern $P_{t,r}$ on the infinite triangular grid $T_\infty$~\cite[Theorem~4.1]{harris_2020}. 
For our purposes, this pattern is the periodic set $P_{k+1,1}$ obtained by placing vertices at all points of the form
\[
\bigl((2k+1)x+ky\bigr)\alpha_1+\bigl(kx+(2k+1)y\bigr)\alpha_2,\qquad x,y\in\mathbb{Z},
\]
where $\alpha_1=(1,0)$ and $\alpha_2=\left(-\frac{1}{2},\frac{\sqrt{3}}{2}\right)$ are the two basis directions of $T_\infty$.

For $k=1$, this gives the pattern $P_{2,1}$, which underlies the tiling-based upper-bound argument for classical domination. Figure~\ref{fig:mm:patron21} depicts $P_{2,1}$ on~$T_\infty$, with a pattern vertex at the origin $(0,0)$.

\begin{figure}[htbp]
    \centering
    
    \begin{subfigure}{0.4\textwidth}
        \centering
        \includegraphics[width=\linewidth]{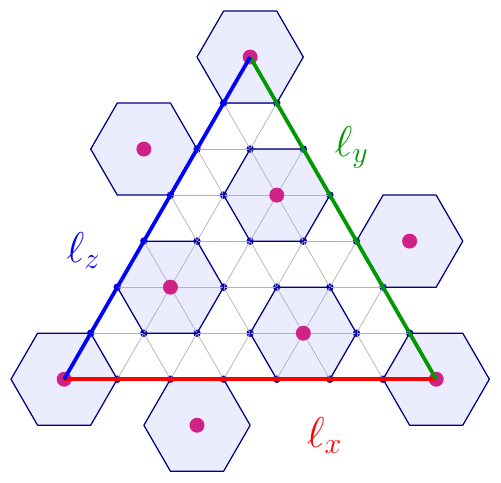}
        \caption{Sides $\ell_x,\ell_y,\ell_z$ and a dominating set for $T_7$ using vertices of~$P_{2,1}$.}
        \label{fig:mm:correccion1}
    \end{subfigure}
    \hfill
    \begin{subfigure}{0.55\textwidth}
        \centering
        \includegraphics[width=\linewidth]{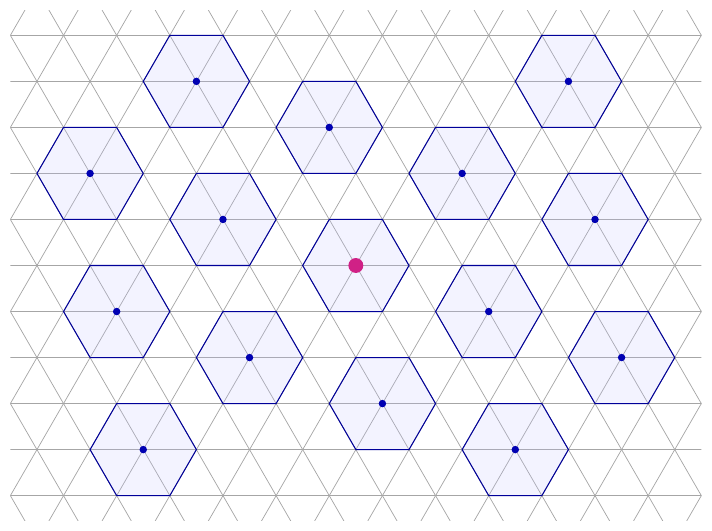}
        \caption{Pattern $P_{2,1}$ on $T_\infty$, with a pattern vertex at the origin $(0,0)$.}
        \label{fig:mm:patron21}
    \end{subfigure}
    
    \caption{Side notation, the pattern $P_{2,1}$, and its application to $T_7$.}
    \label{fig:mm:patron_y_dominacion}
\end{figure}

The pattern $P_{3,1}$ is obtained from the same construction when $k=2$ and underlies the corresponding argument for distance-$2$ domination. We begin by analyzing each of~\cite[Theorems~4.2 and~4.3]{harris_2020} in the case $\beta = 0$, showing that the stated bounds do not hold in this setting.

In \cite[Theorem 4.2]{harris_2020}, the authors cover $T_d$ with copies of $T_7$ using $P_{2,1}$ and argue as follows:

\begin{itemize}
  \item The vertex at the intersection of the sides $\ell_x$ and $\ell_z$ of $T_d$ is placed at $(0,0)$.
  \item The pattern $P_{2,1}$ contains the vertex at $(0,0)$.
  \item If a vertex belongs to the pattern, then all vertices at distance $7$ from it along the directions of the grid also belong to the pattern.
  \item The graph $T_d$ can be covered with $q^2$ copies of $T_7$.
  \item Each copy of $T_7$ is dominated by $9$ vertices of the pattern (Figure~\ref{fig:mm:correccion1}).
\end{itemize}

From this, they obtain the preliminary bound
\[
\gamma(T_d) \le 9q^2.
\]

Since vertices on shared internal edges of adjacent copies of $T_7$ are counted twice, Harris et al. subtract a correction term for this double counting. They argue that this correction equals $6q(q-1)$, since each shared side between two copies of $T_7$ contains four duplicated dominant vertices. The number of such internal sides in the decomposition of~$T_d$ is $\frac{3q(q-1)}{2}$, multiplying by $4$ yields $6q(q-1)$, leading to
\[
\gamma(T_d) \le 3q(q+2).
\]

This correction is valid for vertices lying on at most two internal edges of the decomposition, but it fails at vertices incident with six copies in the tiling. Figure~\ref{fig:mm:correccion2} shows a degree-$4$ vertex $u$ lying on two internal edges: in this situation, $u$ is counted three times and subtracted twice, leaving one occurrence, as intended. In contrast, Figure~\ref{fig:mm:correccion3} shows a degree-$6$ vertex $u$, which lies on six internal edges. Applying the same rule would subtract $u$ once for each incident internal edge, removing all occurrences of $u$ from the count. Consequently, $u$ (and the vertices it is supposed to dominate) are no longer accounted for, and the resulting total is underestimated.

\begin{figure}[htbp]
    \centering
    
    \begin{subfigure}{0.48\textwidth}
        \centering
        \includegraphics[width=\linewidth]{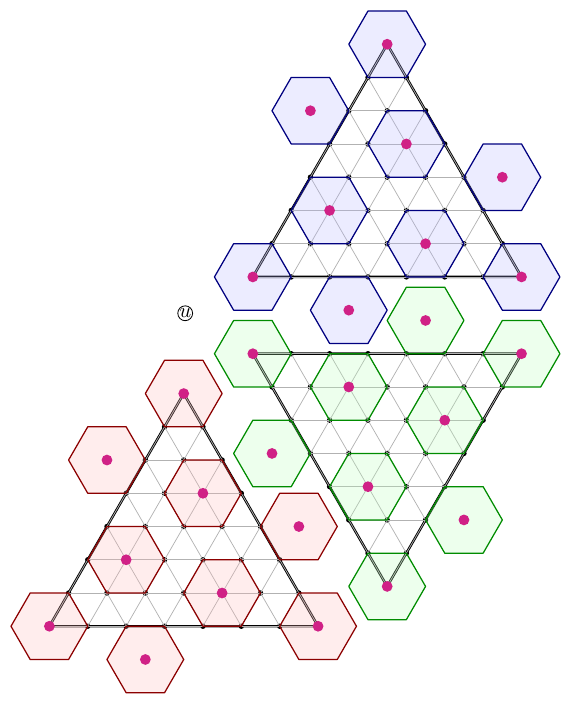}
        \caption{A vertex of degree $4$.}
        \label{fig:mm:correccion2}
    \end{subfigure}
    \hfill
    \begin{subfigure}{0.48\textwidth}
        \centering
        \includegraphics[width=\linewidth]{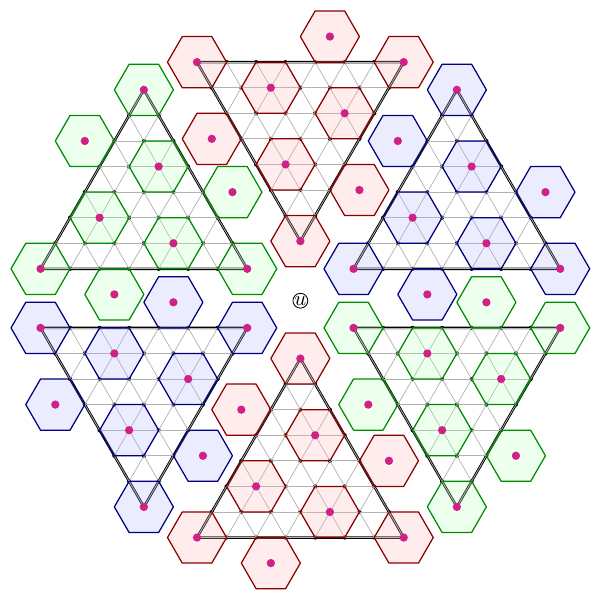}
        \caption{A vertex of degree $6$.}
        \label{fig:mm:correccion3}
    \end{subfigure}
    
    \caption{Application of the method to vertices of different degrees.}
    \label{fig:mm:correcciones_grados}
\end{figure}

The smallest instance where this issue appears is $T_{21}$. Harris et al.\ claim that~$\gamma(T_{21}) \le 45$. However, when the same construction is carried out consistently, the pattern $P_{2,1}$ induces a dominating set of size $46$ for $T_{21}$ (Figure~\ref{fig:mm:correccion4}). 

\begin{figure} [htbp]
  \begin{center}
    \begin{tabular}{c}
      \includegraphics{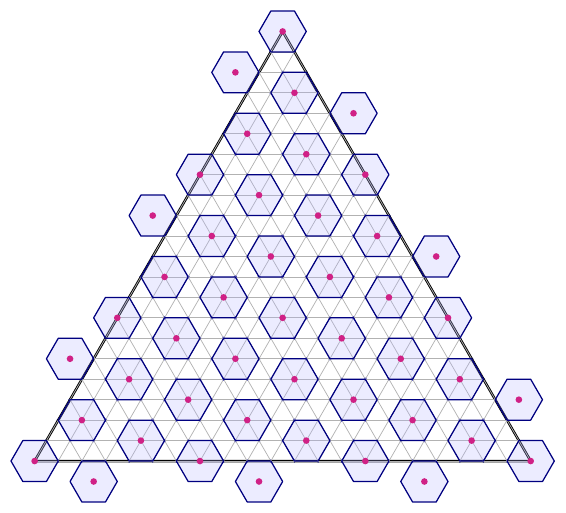}
      \\
    \end{tabular}
    \caption{Dominating set for $T_{21}$ using $46$ vertices of $P_{2,1}$.}
    \label{fig:mm:correccion4}
  \end{center}
\end{figure}

The same issue appears in the proof of Theorem~4.3 of~\cite{harris_2020}, where~$T_d$ is covered by copies of $T_{19}$. Vertices of degree $6$ again occur at points where six copies meet, producing the same invalid cancellation. The first failure arises in $T_{57}$, which contains such a configuration.

Finally, we obtain both an asymptotic and a concrete contradiction between \cite[Lemma~3.1]{harris_2020} and \cite[Theorems~4.2 and~4.3]{harris_2020}. For $k=1$ and $d=7q$, Lemma~3.1 yields
\[
\gamma(T_d)\ \ge\ \left\lceil \frac{(d+2)(d+1)}{14}\right\rceil
= 3.5q^2 + O(q),
\]
whereas Theorem~4.2 asserts
\[
\gamma(T_{7q}) \le 3q(q+2)=3q^2+O(q).
\]
Hence, for sufficiently large $d$, the lower bound must exceed the proposed upper bound. This already occurs at $d=63$ (i.e., $q=9$), where
$
\left\lceil \frac{65\cdot 64}{14}\right\rceil = 298
\quad\text{but}\quad
3\cdot 9\cdot 11 = 297,
$
so the claimed upper bound is strictly smaller than a valid lower bound, and the formula in Theorem~4.2 cannot be correct.

Similarly, for $k=2$ and $d=19q$, Lemma~3.1 gives
\[
\gamma_2(T_d)\ \ge\ \left\lceil \frac{(d+2)(d+1)}{38}\right\rceil
= 9.5q^2 + O(q),
\]
whereas Theorem~4.3 asserts
\[
\gamma_2(T_{19q}) \le 9q(q+1)=9q^2+O(q).
\]
Again, for sufficiently large $d$ the lower bound exceeds the proposed upper bound, and this already occurs at $d=285$ (i.e., $q=15$), where
$
\left\lceil \frac{287\cdot 286}{38}\right\rceil = 2161
\quad\text{but}\quad
9\cdot 15\cdot 16 = 2160.
$
Therefore, the formula in Theorem~4.3 cannot be correct either.

%% file: sections/2_generalization/generalization.tex
\section{Generalization of the upper bound for \texorpdfstring{$\gamma_k(T_d)$}{gamma_k(Td)}}\label{sec:generalization}

% 1. Probar que para cada T_d, con D_k = 3k^2+3k+1, si d = q * D_k, entonces V_k^in (conjunto de vértices internos que pertenecen a P_{k+1,1} ).
% 2. Probar cuántos vértices necesito por lado para cada k, serían 2k + 2, para completar la dominación. B_k
% 3. Probar que I_q es generalizable para cualquier k.
% 4. Probar el upper bound haciendo X + (2k+2)*3 - I_q

In this section, we generalize the corrected tiling argument of Harris et al.~\cite{harris_2020} in order to obtain an upper bound for $\gamma_k(T_d)$. For $k\geq 1$, we define
\[
D_k=3k^2+3k+1.
\]
This value is the distance between consecutive vertices of the pattern $P_{k+1,1}$ along any fixed direction of the triangular lattice. We first consider the case in which, for some integer $q\geq 1$,
\[
d=qD_k.
\]
We place $T_d$ in the infinite triangular lattice $T_\infty$ so that its three corner vertices belong to $P_{k+1,1}$. Under this placement, each side of $T_d$ is divided by vertices of the pattern into $q$ intervals of length $D_k$. Equivalently, $T_d$ can be decomposed into $q^2$ copies of $T_{D_k}$.

We use the following notation. Let $V_k^{\mathrm{in}}$ denote the number of vertices of $P_{k+1,1}$ that belong to a copy of $T_{D_k}$. 
Let $V_k^{\mathrm{ex}}$ denote the number of vertices of $P_{k+1,1}$ that do not belong to this copy of $T_{D_k}$ but are used to dominate vertices on its boundary.
Thus,
\[
V_k=V_k^{\mathrm{in}}+V_k^{\mathrm{ex}}
\]
is the total number of vertices of the pattern used to dominate one copy of $T_{D_k}$.

Moreover, let $B_k$ denote the number of pattern vertices counted in both copies when two adjacent copies of $T_{D_k}$ share a side in the decomposition of $T_d$. 
Finally, let $I_q$ denote the number of interior vertices of the decomposition that belong to $P_{k+1,1}$ and at which six copies of $T_{D_k}$ meet.

\input{sections/2_generalization/lem_internal_pattern_count}

\input{sections/2_generalization/lem_external_pattern_count}

\input{sections/2_generalization/lem_six_copy_intersections}

\input{sections/2_generalization/thm_general_upper_bound}

%% file: sections/2_generalization/lem_internal_pattern_count.tex
\begin{lemma}\label{lem:internal-pattern-count}
Let $D_k=3k^2+3k+1$. In a standard copy of $T_{D_k}$, the pattern $P_{k+1,1}$ contains exactly
\[
V_k^{\mathrm{in}}=\frac{D_k+5}{2}
\]
pattern vertices.
\end{lemma}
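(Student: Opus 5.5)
The plan is to treat $P_{k+1,1}$ as a sublattice $\Lambda$ of the lattice $\mathbb{Z}\alpha_1+\mathbb{Z}\alpha_2$ underlying $T_\infty$. I will then count the points of $\Lambda$ in a standard copy of $T_{D_k}$ using Pick's theorem, measuring area in units of a fundamental cell of $\Lambda$ rather than of $T_\infty$.

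First I will establish two structural facts about $\Lambda$.
\begin{enumerate}[(i)]
\item $\Lambda$ has index $D_k$ in $\mathbb{Z}\alpha_1+\mathbb{Z}\alpha_2$, that is, a fundamental parallelogram of $\Lambda$ has $D_k$ times the area of a fundamental parallelogram of $T_\infty$.
\item On each of the three lattice directions, the points of $\Lambda$ on a line through a pattern vertex are spaced exactly $D_k$ apart.
\end{enumerate}
The paper already states (ii) when it introduces $D_k$. For (i), I will compute a determinant or norm of generators of $\Lambda$. The cleanest route I would try is the standard description of the perfect distance-$k$ code: $\Lambda$ is generated by a vector of squared length $3k^2+3k+1$ and its rotation by $60^\circ$, so the covolume is $D_k$ times the unit covolume.

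Given (i), the quotient $(\mathbb{Z}\alpha_1+\mathbb{Z}\alpha_2)/\Lambda$ has order $D_k$. Fact (ii) says the image of $\alpha_1$ (and likewise of $\alpha_2$ and $\alpha_1+\alpha_2$) has order exactly $D_k$. So $m\alpha_1\in\Lambda$ if and only if $D_k\mid m$.

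Next, place the copy of $T_{D_k}$ with corners $0$, $D_k\alpha_1$ and $D_k(\alpha_1+\alpha_2)$, all of which lie in $\Lambda$ by the placement chosen in the section. Then the triangle is a lattice triangle for $\Lambda$. By (ii), each of its sides contains no $\Lambda$-points other than its endpoints, so the number of boundary points is $B=3$. The parallelogram spanned by $D_k\alpha_1$ and $D_k\alpha_2$ has area $D_k^2$ in $T_\infty$-cells, hence $D_k$ in $\Lambda$-cells. The triangle is half of it, with area $A=D_k/2$ in $\Lambda$-units. Pick's theorem applied with respect to $\Lambda$ (after an affine map sending $\Lambda$ to $\mathbb{Z}^2$) gives $A=I+B/2-1$. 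Hence $I=(D_k-1)/2$, and the total count is
\[
V_k^{\mathrm{in}}=I+B=\frac{D_k-1}{2}+3=\frac{D_k+5}{2}.
\]
This is an integer since $D_k$ is odd.

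The main obstacle is step (i) together with (ii), namely pinning down the index of $\Lambda$ rigorously from the paper's explicit parametrization. A naive determinant of the stated generators in the $120^\circ$ basis does not obviously give $3k^2+3k+1$, so I would first re-derive a clean basis of $P_{k+1,1}$. The basis I would aim for is $(k+1)\alpha_1-k\alpha_2$ together with its $60^\circ$ rotation, whose squared length in the norm $a^2-ab+b^2$ equals $D_k$. I would then check directly that $\Lambda\cap\mathbb{Z}\alpha_1=D_k\mathbb{Z}\alpha_1$. If that verification becomes messy, a fallback is to argue via perfect codes: the distance-$k$ balls around $\Lambda$ tile $T_\infty$, and each ball has $D_k$ vertices, so $\Lambda$ has density $1/D_k$ and therefore index $D_k$.
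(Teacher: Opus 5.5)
Your proposal is correct and is essentially the paper's argument. The paper runs the Euler-characteristic computation that underlies Pick's theorem on a triangulation of $T_{D_k}$ by pattern vertices: it gets $D_k$ bounded faces from the area ratio and $3$ boundary edges from the spacing $D_k$, which is exactly your Pick computation with $A=D_k/2$ and $B=3$ in $\Lambda$-units. Your explicit basis $(k+1)\alpha_1-k\alpha_2$, $(2k+1)\alpha_1+(k+1)\alpha_2$ has determinant $D_k$ and satisfies $\Lambda\cap\mathbb{Z}\alpha_1=D_k\mathbb{Z}\alpha_1$, so it supplies the index and spacing facts that the paper only asserts. Your suspicion about the stated parametrization is also justified: the paper's displayed generators have determinant $(2k+1)^2-k^2=(k+1)(3k+1)\neq D_k$.
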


\begin{proof}

The proof follows the same Euler-characteristic counting principle underlying Pick's theorem~\cite{pick_1899}.
Consider a standard copy of $T_{D_k}$ whose three corner vertices belong to the triangular sublattice determined by $P_{k+1,1}$. This sublattice induces a triangulation of $T_{D_k}$. Let $V,E,F$ denote the numbers of vertices, edges, and faces of this triangulation, respectively, where $F$ includes the exterior face.

Each triangular cell of the induced sublattice has area $\frac{D_k\sqrt{3}}{4}$, whereas the area of $T_{D_k}$ is $\frac{D_k^2\sqrt{3}}{4}$. Hence the number of bounded triangular faces is $D_k$, and therefore $F=D_k+1$.

Since consecutive vertices of $P_{k+1,1}$ on each side are separated by distance $D_k$, each side of the standard copy of $T_{D_k}$ contributes exactly one boundary edge to this triangulation. 
Hence the exterior face has three boundary edges, and so $b=3$.
Counting edge incidences over the bounded triangular faces gives
\[
3(F-1)=2E-b,
\]
because each interior edge is incident with two bounded triangular faces, whereas each boundary edge is incident with exactly one. Thus
\[
E=\frac{3(F-1)+b}{2}
=
\frac{3D_k+3}{2}.
\]

By Euler's formula,
\[
V-E+F=2.
\]

Since the vertices of this triangulation are precisely the vertices of $P_{k+1,1}$ contained in the standard copy of $T_{D_k}$, we obtain
\begin{align*}
V_k^{\mathrm{in}}=V
&=E-F+2 \\
&=\frac{3D_k+3}{2}-(D_k+1)+2 \\
&=\frac{D_k+5}{2}.
\end{align*}

\end{proof}

%% file: sections/2_generalization/lem_external_pattern_count.tex
\begin{lemma}\label{lem:external-pattern-count}
For a copy of $T_{D_k}$, 
$
V_k^{\mathrm{ex}}=3k.
$
Moreover, if two copies of $T_{D_k}$ share a side, then the number of pattern vertices counted in both copies with respect to that shared side is
$
B_k=2k+2.
$
\end{lemma}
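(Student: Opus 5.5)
The plan is to work with explicit coordinates for the pattern near one side of a standard copy, and then use the $120^\circ$ rotational symmetry of $P_{k+1,1}$ about a pattern vertex to transfer the count to the other two sides.

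\textbf{Setup.} Put the copy of $T_{D_k}$ with corners $0$, $D_k\alpha_1$ and $D_k(\alpha_1+\alpha_2)$, all of which are pattern vertices. Its lattice points are those with $0\le b\le a\le D_k$.

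The pattern is the lattice generated by $u=(2k+1)\alpha_1+k\alpha_2$ and $v=k\alpha_1+(2k+1)\alpha_2$, of index $D_k$ in $\mathbb{Z}^2$. Since $\gcd$ considerations give exactly one pattern point per residue class, each horizontal line $b=c$ contains pattern points spaced $D_k$ apart.

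Each pattern vertex $p$ distance-$k$ dominates a hexagonal ball of $D_k=1+3k(k+1)$ vertices. These balls tile $T_\infty$, which is the standard perfect-code fact for $P_{k+1,1}$. Consequently every boundary vertex of the copy is dominated by exactly one pattern vertex, namely the unique $p$ with $dist(x,p)\le k$.

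\textbf{Step 1 (one side).} Take the side $\ell_x$, i.e.\ $b=0$, $0\le a\le D_k$. The pattern vertices whose balls meet this side lie on the rows $b=-k,\dots,k$. Those with $b\ge 0$ and inside the triangle are already counted in $V_k^{\mathrm{in}}$. The external ones are the pattern vertices with $-k\le b\le -1$ whose ball meets the segment.

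For each row $b=-j$ with $1\le j\le k$, I will argue that exactly one pattern vertex lies in the window of horizontal positions whose hexagon reaches the segment. That window has length about $D_k$ minus boundary effects, and the pattern points on the row are $D_k$-periodic. The corner vertices $0$ and $D_k\alpha_1$ are themselves pattern points, so the balls of external vertices near a corner do not also cover the corner.

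This should give exactly $k$ external vertices per side. By the $120^\circ$ symmetry, the three sides together give $3k$. I must also check that no external vertex is shared between two sides, since it would need to be within $k$ of both sides near a corner. This is where the corner pattern vertex excludes it, because the balls are disjoint and the corner's ball already covers its neighbourhood.

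\textbf{Step 2 ($B_k$).} For two copies sharing a side $s$, the vertices counted in both are of two kinds:
\begin{itemize}
\item the pattern vertices lying on $s$, which are the two corners;
\item the pattern vertices within distance $k$ of $s$ on either side whose balls meet $s$.
\end{itemize}
Each copy counts the vertices on its own side of $s$ as internal and those on the other side as external. From Step 1 there are $k$ such vertices on each side of $s$, so $B_k=2+k+k=2k+2$. This uses the fact that the only pattern vertices on $s$ are its endpoints, which holds because the spacing along $s$ is $D_k$.

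\textbf{Main obstacle.} The hardest part is the exact count in Step 1: showing there is precisely one relevant pattern vertex per row $b=-j$. I would do this by writing the row-$(-j)$ pattern points explicitly, solving $xk+(2k+1)y=-j$. The ball of $p=(a_0,-j)$ meets row $0$ exactly in the $k-j+1$ points $a_0\le a\le a_0+k-j$, up to the lattice orientation convention.

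The union of these intervals over the $k$ external rows and the $k+1$ internal rows $0,\dots,k$ must partition $[0,D_k]$ apart from the corners. A length count confirms this:
\[
\sum_{j=1}^{k}(k-j+1)+\sum_{j=0}^{k}(k-j+1)=\frac{k(k+1)}{2}+\frac{(k+1)(k+2)}{2}=(k+1)^2 .
\]
This is not $D_k+1$, so several vertices per row per period must occur and the per-row count needs care. If this bookkeeping fails, I will fall back to verifying the count directly from the coordinate description, using the perfect tiling to count the vertices whose hexagons straddle the segment.
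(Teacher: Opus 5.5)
Your overall plan matches the paper's: consider the rows parallel to a side within distance $k$, show that each exterior row contributes exactly one external dominating pattern vertex, use symmetry to get $3k$, and conclude $B_k=2+k+k$. The gap is that the step carrying all the content, namely \emph{exactly} one such vertex in each row $b=-j$, is never established. Your window argument does not give it. The positions $a_0$ on row $b=-j$ whose ball meets $\ell_x$ form the range $-k\le a_0\le D_k+k-j$. This range has more than $D_k$ points, so periodicity alone allows two. If you discard positions whose ball would collide with a corner ball, minimum distance $2k+1$ forces $2k+1-j\le a_0\le D_k-2k-1$. Fewer than $D_k$ positions then remain, so periodicity gives at most one but says nothing about existence. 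Your back-up computation uses a wrong intersection count. In the $(a,b)$ coordinates, the radius-$k$ ball about $(a_0,-j)$ meets row $0$ in the $2k+1-j$ points $a_0-(k-j)\le a\le a_0+k$, not in $k-j+1$ points; for $j=0$ the middle row of a radius-$k$ hexagon already has $2k+1$ points. That mistake is what produces your mismatch $(k+1)^2\ne D_k+1$. You leave it unresolved with an unspecified fallback, so Step~1 is unproved, and Step~2 depends on it.

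Either of two fixes closes the gap. With the correct counts, $\sum_{j=-k}^{k}(2k+1-|j|)=D_k$. Since the balls tile, their intersections with row $0$ tile that row with exactly one interval per row $b=-k,\dots,k$ in each period window $-k\le a\le D_k-k-1$. Deleting the corner intervals $[-k,k]$ and $[D_k-k,D_k+k]$ leaves $k+1\le a\le D_k-k-1$, which is covered by exactly one ball from each row $b=\pm1,\dots,\pm k$. The paper's route is shorter. By periodicity, the $D_k+1$ positions $0\le a\le D_k$ of row $b=-j$ contain a pattern point. They cannot contain two, since $(0,-j)$ and $(D_k,-j)$ are within distance $j\le k$ of pattern corners. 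Each such point is within distance $j$ of $\ell_x$, and any other pattern point of that row within distance $k$ of $\ell_x$ would be within distance $k$ of a corner.

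Separately, the generators you wrote (as in the displayed definition of $P_{k+1,1}$) have determinant $(2k+1)^2-k^2=(k+1)(3k+1)\ne D_k$. Solving $xk+(2k+1)y=-j$ would therefore give the wrong row spacing. An explicit computation should use a genuine perfect-code lattice, for example the one generated by $(2k+1)\alpha_1+k\alpha_2$ and $-k\alpha_1+(k+1)\alpha_2$.
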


\begin{proof}
Consider a copy of $T_{D_k}$ and its side $\ell_x$.
The same argument applies to the other two sides.
Let $R(\ell_x)$ be the parallelogram formed by the rows parallel to $\ell_x$ that are at distance at most $k$ from $\ell_x$: the $k$ rows outside $T_{D_k}$, the side $\ell_x$ itself, and the $k$ rows inside $T_{D_k}$; see Figure~\ref{fig:gen:parallelogram}, which illustrates the case $k=2$. 
Thus, $R(\ell_x)$ contains $2k+1$ rows parallel to $\ell_x$.

By the placement of the pattern $P_{k+1,1}$, the two endpoints of $\ell_x$ belong to the pattern.
Moreover, each row of $R(\ell_x)$ has the same length as $\ell_x$, namely length $D_k$ in the corresponding direction.
By the periodicity of $P_{k+1,1}$, each of these rows contains either one or two pattern vertices.
The only row that contains two pattern vertices is the central row, namely $\ell_x$ itself, since its two endpoints belong to $P_{k+1,1}$.

Indeed, if a row different from $\ell_x$ contained two pattern vertices, then these vertices would have to be located at its endpoints.
However, each endpoint of such a row is at distance at most $k$ from one of the endpoints of $\ell_x$.
Since the endpoints of $\ell_x$ already belong to the pattern, and since consecutive vertices of $P_{k+1,1}$ in each principal direction are separated by distance $D_k$, no endpoint of such a row can also belong to the pattern.
Therefore, every row different from $\ell_x$ contains exactly one vertex of $P_{k+1,1}$.

Among the $2k+1$ rows of $R(\ell_x)$, the central row contributes the two endpoints of $\ell_x$, the $k$ exterior rows contribute $k$ pattern vertices outside $T_{D_k}$, and the $k$ interior rows contribute $k$ pattern vertices inside $T_{D_k}$.
In particular, for the side $\ell_x$ of $T_{D_k}$ there are $k$ exterior pattern vertices that dominate vertices on that side.
The same argument applies to the exterior rows associated with the other two sides of $T_{D_k}$, and these three sets of exterior rows are pairwise disjoint. Hence,
\[
V_k^{\mathrm{ex}}=3k.
\]

Now consider two adjacent copies of $T_{D_k}$ sharing a side.
By symmetry, we may assume that the shared side is parallel to $\ell_x$.
Applying the previous argument to the parallelogram associated with this shared side, the pattern vertices counted in both copies with respect to this side are the two endpoints of the side, the $k$ vertices on one side, and the $k$ vertices on the other side.
Therefore,
\[
B_k=2+k+k=2k+2.
\]
\end{proof}

\begin{figure}[htbp]
    \centering

    \begin{subfigure}{0.68\textwidth}
        \centering
        \includegraphics[width=0.85\linewidth]{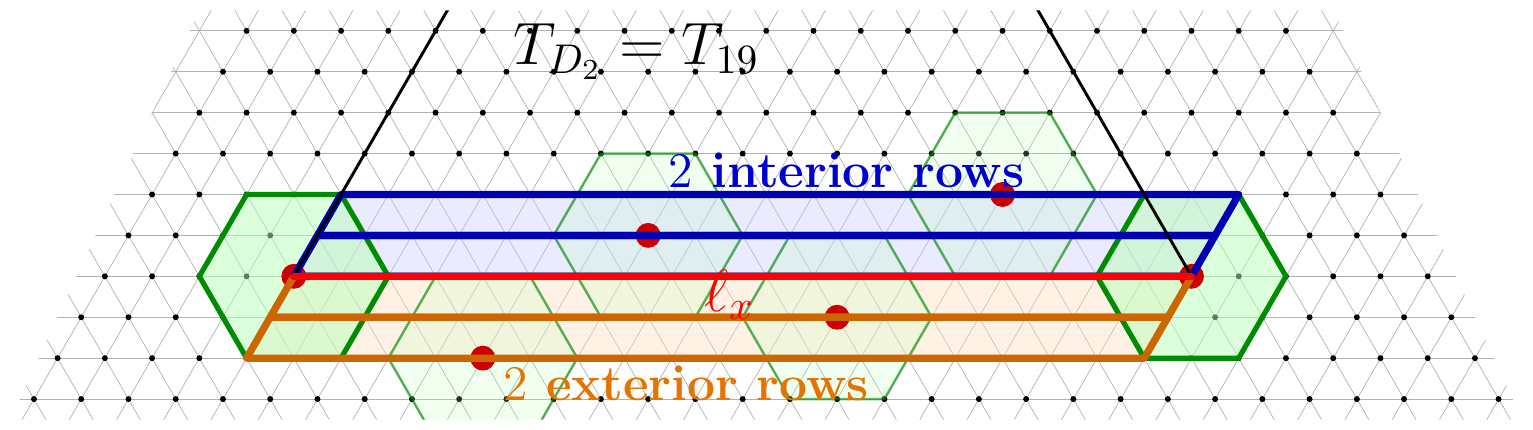}
        \caption{The parallelogram $R(\ell_x)$ associated with the side $\ell_x$, illustrated for $k=2$. It contains the two exterior rows, the side $\ell_x$, and the two interior rows used to count the boundary vertices; in general, $R(\ell_x)$ contains $2k+1$ rows.}
        \label{fig:gen:parallelogram}
    \end{subfigure}
    \hfill
    \begin{subfigure}{0.3\textwidth}
        \centering
        \includegraphics[width=\linewidth]{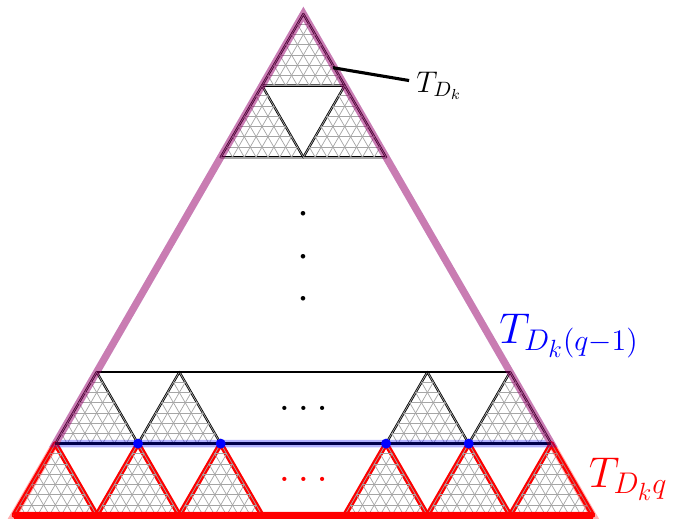}
        \caption{Schematic addition of a new row of copies of $T_{D_k}$ to pass from $T_{D_k(q-1)}$ to $T_{D_kq}$ in the induction for $I_q$.}
        \label{fig:gen:row-addition}
    \end{subfigure}
    
    \caption{Local configurations for the generalized tiling argument.}
    \label{fig:gen:local-configurations}
\end{figure}

%% file: sections/2_generalization/lem_six_copy_intersections.tex
\begin{lemma}\label{lem:six-copy-intersections}
Let $d=D_kq$, with $q\geq 1$.
Consider the decomposition of $T_d$ into $q^2$ copies of $T_{D_k}$ induced by the placement of the pattern $P_{k+1,1}$.
Then the number of interior vertices of $T_d$ that belong to $P_{k+1,1}$ and lie at the intersection of six copies of $T_{D_k}$ is
\[
I_q=\frac{(q-1)(q-2)}{2}.
\]
\end{lemma}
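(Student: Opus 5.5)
The plan is to identify the vertices in question with the interior lattice points of the coarse triangular grid that the decomposition induces, and then count those points. Scaling by $1/D_k$, the $q^2$ copies of $T_{D_k}$ become the unit triangles of a copy of $T_q$ drawn in $T_\infty$. The corners of the copies are exactly the vertices of this coarse $T_q$, and all of them belong to $P_{k+1,1}$, since the corners of $T_d$ do and the pattern is invariant under translation by $D_k$ along each lattice direction.

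A corner of the decomposition lies in six copies exactly when it is an interior vertex of the coarse $T_q$, i.e.\ not on $\ell_x$, $\ell_y$ or $\ell_z$. A corner on a side but not at a corner of $T_d$ lies in three copies, and a corner of $T_d$ lies in one. Conversely, a vertex of $T_d$ that is not a coarse corner lies in at most two copies, because it is either interior to one copy or on a single shared edge. So $I_q$ equals the number of interior vertices of the triangular grid $T_q$.

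The count itself is direct: $T_q$ has $\frac{(q+1)(q+2)}{2}$ vertices, and $3q$ of them lie on its boundary. Hence
\[
I_q=\frac{(q+1)(q+2)}{2}-3q=\frac{q^2-3q+2}{2}=\frac{(q-1)(q-2)}{2}.
\]
This holds for $q\ge 1$; for $q=1$ and $q=2$ the value is $0$, as it should be.

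To match the figure the paper prepares (adding a row of copies), I could also argue by induction on $q$. Passing from $T_{D_k(q-1)}$ to $T_{D_kq}$ adds a strip of $2q-1$ copies along one side. This turns the $q-2$ non-corner coarse vertices of the old boundary side into six-copy vertices and creates no others, so $I_q=I_{q-1}+(q-2)$ with $I_1=0$, which gives the same formula.

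The main obstacle is the identification step: showing that six copies meet at exactly the interior coarse corners, and that no other vertex lies in six copies. This needs the fact that the copies form an edge-to-edge tiling, which follows from the placement: each side of $T_d$ is split into $q$ intervals of length $D_k$ and the copies are the translates and reflections of $T_{D_k}$ in the coarse grid. Once the tiling is established, the local structure around each vertex is the standard one for the triangular lattice.
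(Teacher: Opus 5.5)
Your proof is correct, and your main argument takes a different route from the paper's, although your fourth paragraph reproduces the paper's argument exactly.

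The paper argues only by induction on $q$. It adds a strip of copies along $\ell_x$ and notes that the $q-2$ non-corner pattern vertices of the old side become six-copy vertices. This gives $I_q=I_{q-1}+(q-2)$ with $I_1=0$.

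Your primary argument rescales by $1/D_k$ and identifies the six-copy vertices with the interior vertices of the coarse grid $T_q$. The closed form then follows at once by complementary counting, $\frac{(q+1)(q+2)}{2}-3q=\frac{(q-1)(q-2)}{2}$.

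What your route buys is that the identification is stated explicitly in both directions:
\begin{itemize}
\item every interior coarse corner is a pattern vertex lying in six copies;
\item every other vertex lies in at most three copies (a boundary non-corner coarse vertex in three, a non-corner vertex in at most two).
\end{itemize}
The paper's induction uses this only tacitly, when it asserts that exactly those $q-2$ vertices become six-copy vertices and that no others arise. The inductive version, on the other hand, follows the row-addition picture the paper draws.

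Both arguments rest on the same fact, which the paper asserts rather than derives: the pattern has period $D_k$ along each lattice direction, so all coarse corners are pattern vertices.
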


\begin{proof}
We proceed by induction on $q$.
For $q=1$, the decomposition consists of a single copy of $T_{D_k}$, so there are no interior vertices lying at the intersection of six copies of $T_{D_k}$.
Hence,
$
I_1=0=\frac{(1-1)(1-2)}{2}.
$
Assume that the result holds for $q-1$, with $q>1$. 
The graph $T_{D_kq}$ is obtained from $T_{D_k(q-1)}$ by adding a new row of copies of $T_{D_k}$ along the side $\ell_x$, as illustrated schematically in Figure~\ref{fig:gen:row-addition}.

By the placement of the pattern, the side $\ell_x$ of $T_{D_k(q-1)}$ is divided into $q-1$ intervals of length $D_k$ by vertices of $P_{k+1,1}$.
Therefore, this side contains $q$ pattern vertices, of which $q-2$ are not corners. When the new row of copies of $T_{D_k}$ is added, precisely these $q-2$ vertices cease to lie on the boundary and become interior vertices of $T_{D_kq}$ where six copies of $T_{D_k}$ meet.

Thus, the number of new vertices with this property is $q-2$. By the induction hypothesis,
\begin{align*}
I_q
&= I_{q-1}+(q-2) \\
&= \frac{(q-2)(q-3)}{2}+(q-2) \\
&= \frac{(q-1)(q-2)}{2}.
\end{align*}
This completes the induction.
\end{proof}

%% file: sections/2_generalization/thm_general_upper_bound.tex
\general*

\begin{proof}
First consider the case $\beta=0$.
Then $d=D_kq$.
By Lemma~\ref{lem:internal-pattern-count}, the number of vertices of the pattern $P_{k+1,1}$ contained in a copy of $T_{D_k}$ is
\[
V_k^{\mathrm{in}}=\frac{D_k+5}{2}.
\]
Moreover, by Lemma~\ref{lem:external-pattern-count}, the number of exterior pattern vertices needed to dominate the boundary of a copy of $T_{D_k}$ is $V_k^{\mathrm{ex}}=3k$. Therefore,
\[
V_k
=
V_k^{\mathrm{in}}+V_k^{\mathrm{ex}}
=
\frac{D_k+5}{2}+3k.
\]
Since $D_k=3k^2+3k+1$, we obtain
\[
V_k
=
\frac{3k^2+3k+6}{2}+3k
=
\frac{3k^2+9k+6}{2}.
\]

Now, since $T_d=T_{D_kq}$ can be decomposed into $q^2$ copies of $T_{D_k}$, considering each copy separately gives the initial count $V_kq^2$.
However, when two adjacent copies share a side, some pattern vertices are counted in both local counts. By Lemma~\ref{lem:external-pattern-count}, for each shared side the number of such vertices is
\[
B_k=2k+2.
\]

The decomposition of $T_d$ into $q^2$ copies of $T_{D_k}$ has
\[
\frac{3q(q-1)}{2}
\]
shared sides.
Hence, we subtract
\[
B_k\frac{3q(q-1)}{2}.
\]

This subtraction leaves one occurrence of every pattern vertex that is counted in two or three local counts, since such a vertex is subtracted through the correction terms of one or two shared sides, respectively. The exceptional case occurs at an interior vertex where six copies meet: such a vertex is counted six times and is subtracted once for each of the six incident shared sides, leaving no occurrence. Hence these vertices must be added back once. By Lemma~\ref{lem:six-copy-intersections}, their number is
\[
I_q=\frac{(q-1)(q-2)}{2}.
\]
Thus,
\[
\gamma_k(T_d)
\leq
V_kq^2
-
B_k\frac{3q(q-1)}{2}
+
I_q.
\]
Substituting $V_k=\frac{3k^2+9k+6}{2}$, $B_k=2k+2$, and $I_q=\frac{(q-1)(q-2)}{2}$, we obtain
\begin{align*}
\gamma_k(T_d)
&\leq
\frac{3k^2+9k+6}{2}q^2
-
(2k+2)\frac{3q(q-1)}{2}
+
\frac{(q-1)(q-2)}{2} \\
&=
\frac{3k^2+9k+6}{2}q^2
-
3(k+1)q(q-1)
+
\frac{q^2-3q+2}{2} \\
&=
\frac{(3k^2+3k+1)q^2+(6k+3)q+2}{2}.
\end{align*}
Since $D_k=3k^2+3k+1$, it follows that
\[
\gamma_k(T_d)
\leq
\frac{D_kq^2+(6k+3)q+2}{2}.
\]

Now suppose that $\beta>0$. Then
\[
d=D_kq+\beta<D_k(q+1).
\]
In this case, we apply the previous construction to the triangle $T_{D_k(q+1)}$.
Since $T_d$ is contained in this larger triangle, the same construction provides a distance-$k$ dominating set for $T_d$ with no more vertices than those used for $T_{D_k(q+1)}$.
Therefore, applying the case $\beta=0$ with $q+1$ in place of $q$, we obtain
\[
\gamma_k(T_d)
\leq
\frac{D_k(q+1)^2+(6k+3)(q+1)+2}{2}.
\]
This completes the proof.
\end{proof}

%% file: sections/3_improvement/improvement.tex
\section{Improvement of the upper bound for the domination number of \texorpdfstring{$T_d$}{Td}}\label{sec:improvement}

In this section, we improve the corrected upper bound for the domination number of triangular matchstick graphs in the case $k=1$. More precisely, for $d=7q+\beta$, with $q\geq 1$ and $0\leq \beta \leq 6$, we construct a dominating set of $T_d$ whose size improves the bound obtained from the corrected version of the argument of Harris et al.

\improvementkone*

\begin{proof}
The proof is carried out in three phases that apply to all values of $\beta$. 
In Phase~I, we extend a dominating set of $T_{7q}$ to dominate $T_{7q+\beta}$ and obtain an initial upper bound for $\gamma(T_d)$.
In Phase~II, we translate the graph within the infinite triangular grid while preserving the initial distribution of the vertices of $P_{2,1}$, so that the upper bound on $\gamma(T_d)$ is maintained and may decrease in some cases.
In Phase~III, we perform local optimizations that further decrease the size of the dominating set. The final bounds follow by combining the contributions of these three phases for each value of $\beta$.

\begin{figure}[htbp]
    \centering
    
    % beta 1 y 2
    \begin{subfigure}{0.42\textwidth}
        \centering
        \includegraphics[width=\linewidth]{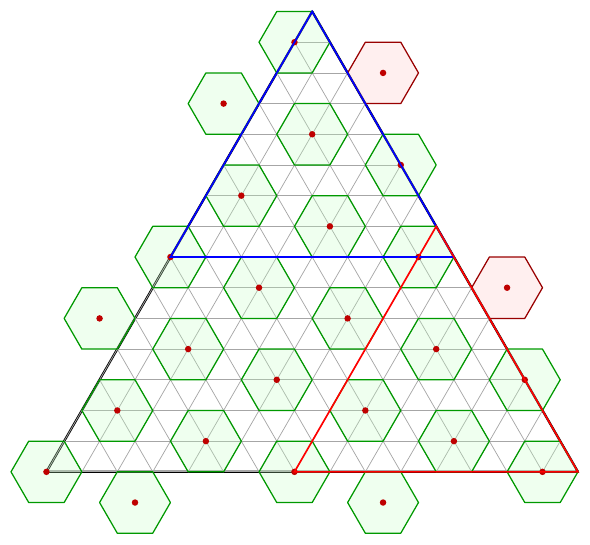}
        \caption{$\beta = 1$.}
    \end{subfigure}
    \hfill
    \begin{subfigure}{0.42\textwidth}
        \centering
        \includegraphics[width=\linewidth]{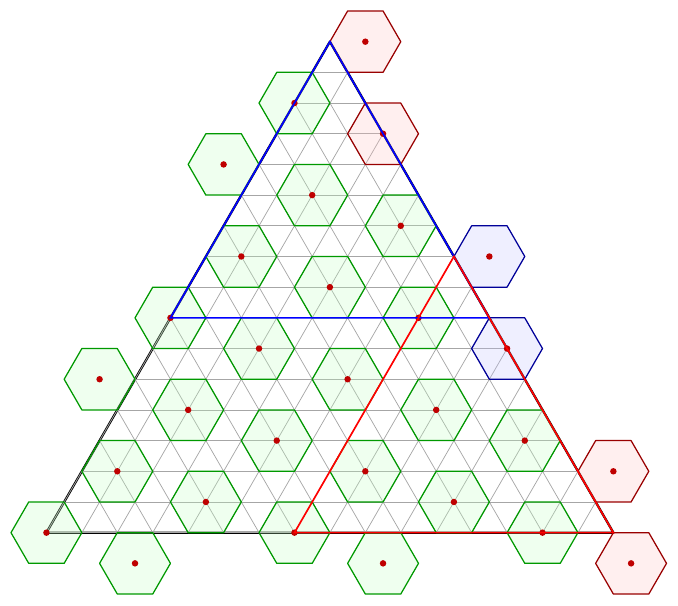}
        \caption{$\beta = 2$.}
    \end{subfigure}
    
    % beta 3 y 4
    \begin{subfigure}{0.45\textwidth}
        \centering
        \includegraphics[width=\linewidth]{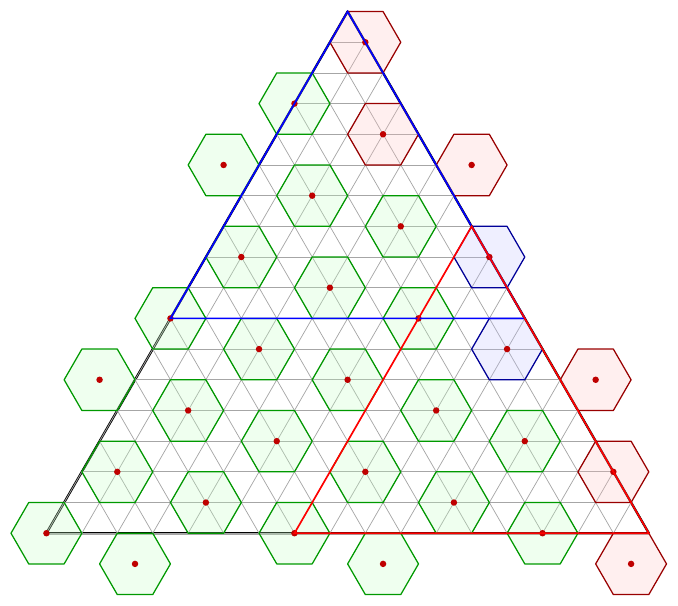}
        \caption{$\beta = 3$.}
    \end{subfigure}
    \hfill
    \begin{subfigure}{0.45\textwidth}
        \centering
        \includegraphics[width=\linewidth]{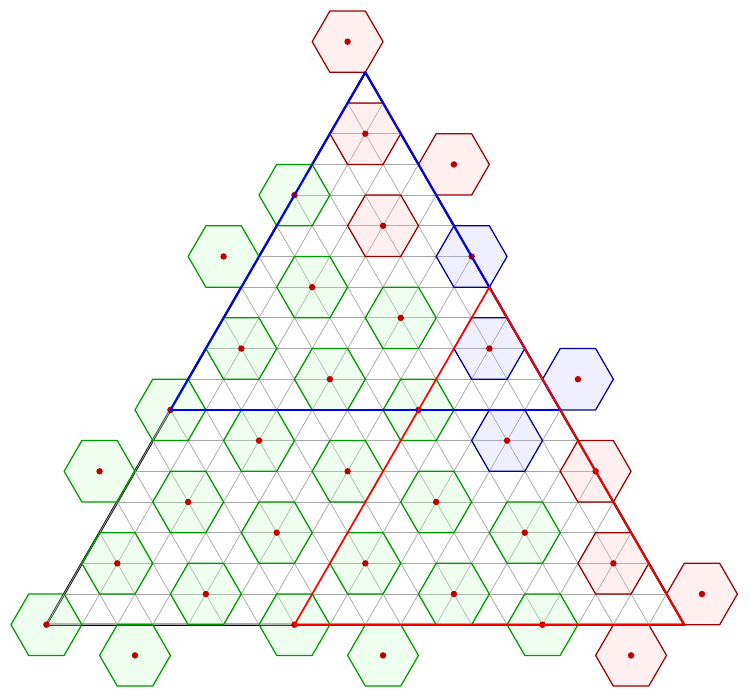}
        \caption{$\beta = 4$.}
    \end{subfigure}

    % beta 5 y 6
    \begin{subfigure}{0.47\textwidth}
        \centering
        \includegraphics[width=\linewidth]{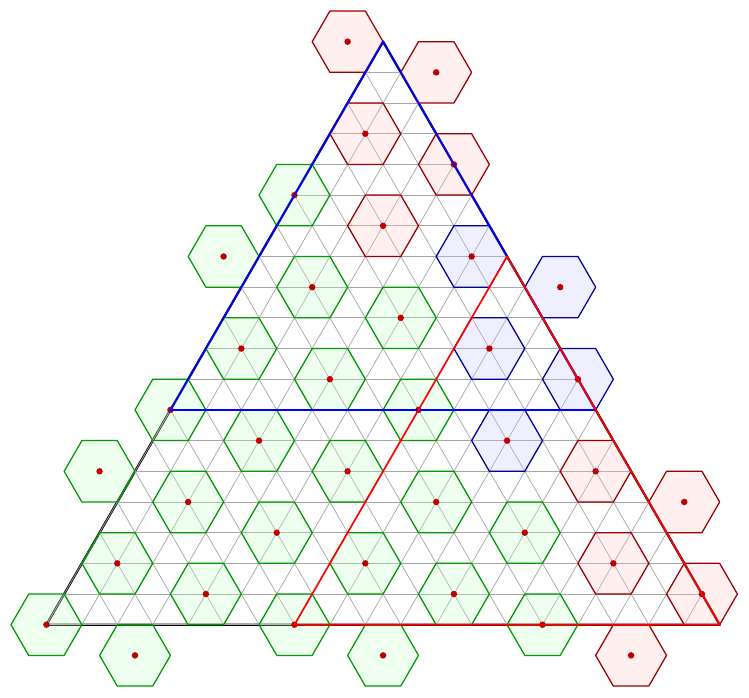}
        \caption{$\beta = 5$.}
    \end{subfigure}
    \hfill
    \begin{subfigure}{0.47\textwidth}
        \centering
        \includegraphics[width=\linewidth]{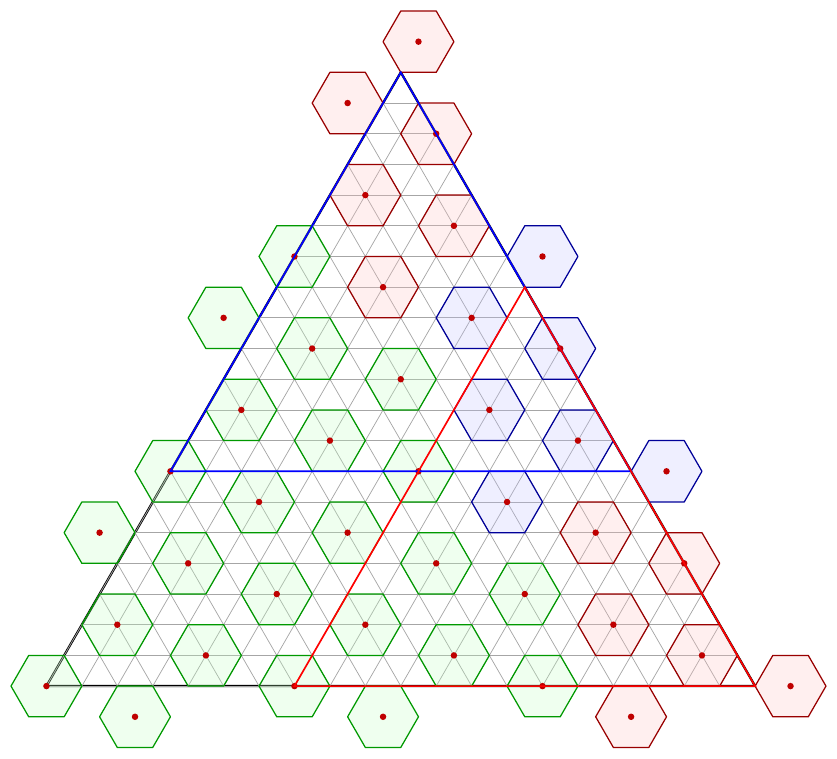}
        \caption{$\beta = 6$.}
    \end{subfigure}
    
    \caption{Phase~I (construction). Extension of a dominating set from $T_{7q}$ to~$T_{7q+\beta}$ for $q=2$ and $1 \le \beta \le 6$. The vertices with green hexagons form a dominating set of $T_{7q}$, while the red and blue vertices represent the additional vertices required to dominate the attached copies of $T_{7+\beta}$; the blue vertices are shared by two adjacent copies.}
    \label{fig:teorema_espec:fase1}
\end{figure}

%%%%%%%%%% FASE 1
\paragraph{Phase I: Construction of $T_{7q+\beta}$ from $T_{7q}$.}

Recall that $d=7q+\beta$. 
The graph $T_d$ contains a unique subgraph isomorphic to $T_{7(q-1)}$ that shares the vertex at the intersection of the sides~$\ell_x$ and $\ell_z$ of $T_d$. 
Along the side $\ell_y$ of this subgraph, the vertices of the pattern $P_{2,1}$ occur periodically with a period of $7$, including both endpoints of the side; consequently, there are exactly $q$ such vertices on this side.

Each of these vertices lies at the intersection of the sides $\ell_x$ and $\ell_z$ of a copy of $T_{7+\beta}$ attached to $T_{7(q-1)}$, and the vertices of $P_{2,1}$ that dominate each such copy form the same pattern depending only on $\beta$. Adjacent copies may share some dominating vertices, so the total number of additional vertices depends on $\beta$ and grows linearly with $q$.

For $1\le \beta\le 6$, this construction yields an initial upper bound on $\gamma(T_d)$ obtained by adding the vertices required to dominate the attached copies of $T_{7+\beta}$. These vertices are indicated in Figure~\ref{fig:teorema_espec:fase1}, where the green vertices correspond to those already present in a dominating set of~$T_{7q}$, while the red and blue vertices belong to the added set, with the blue vertices being those shared by two adjacent copies of $T_{7+\beta}$. When $\beta=0$, no additional copies are attached, and the bound follows directly from Theorem~\ref{thm:general-upper-bound} applied with $k=1$, since $D_1=7$. Therefore, for all $0\le \beta\le 6$, Phase~I gives the following initial estimate:

\[
\gamma(T_d) \le 3.5q^2 + (\beta+4.5)q + (\beta-1) + a_\beta,
\]
where
\[
a_\beta =
\begin{cases}
1, & \beta \in \{1,3\},\\
2, & \beta \in \{0,2,4,5\},\\
3, & \beta = 6.
\end{cases}
\]

%%%%%%%%%% FASE 2

\paragraph{Phase II: Translation.}

To optimize the number of dominating vertices, we translate the graph $T_d$ within the infinite triangular grid $T_\infty$ while preserving the distribution of the pattern $P_{2,1}$. A \emph{translation} consists of shifting the graph rigidly without altering the relative positions of the vertices of $P_{2,1}$; hence the resulting set remains a valid dominating set. In particular, we apply a translation in the direction parallel to the side $\ell_z$, as illustrated in Figure~\ref{fig:teorema_espec:traslacion}. 

\begin{figure}[htbp]
    \centering
    \includegraphics[width=0.4\linewidth]{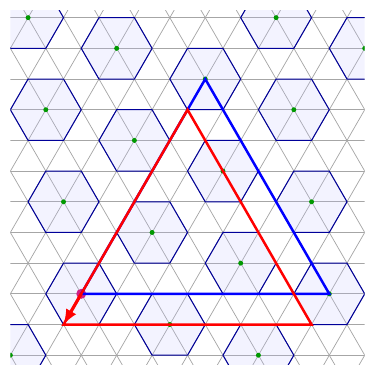}
    \caption{Translation of the graph $T_7$ (from blue to red) on $T_\infty$.}
    \label{fig:teorema_espec:traslacion}
\end{figure}

Under this translation, some dominating vertices required in Phase~I are no longer needed, while others must be added along the new boundary. In Figure~\ref{fig:teorema_espec:fase2}, the red vertices indicate those added after the translation, and the blue vertices indicate those that are no longer required. This produces a revised upper bound on $\gamma(T_d)$ obtained after the translation, leading to the following estimate:
\[
\gamma(T_d) \le 3.5q^2 + (\beta+4.5)q + b_\beta,
\]
where
\[
b_\beta =
\begin{cases}
1, & \beta \in \{0,1\},\\
2, & \beta = 2,\\
3, & \beta \in \{3,4\},\\
6, & \beta = 5,\\
7, & \beta = 6.
\end{cases}
\]

\begin{figure}[htbp]
    \centering
        
    % beta 0 1 2
    \begin{subfigure}{0.28\textwidth}
        \centering
        \includegraphics[width=\linewidth]{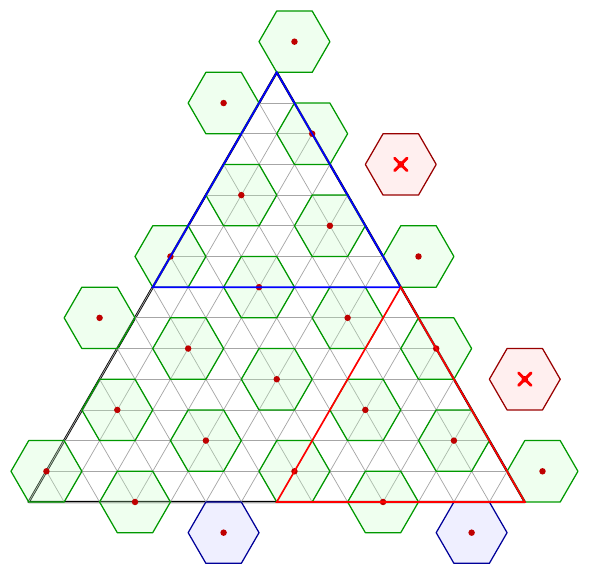}
        \caption{$\beta = 0$.}
    \end{subfigure}
    \hfill
    \begin{subfigure}{0.30\textwidth}
        \centering
        \includegraphics[width=\linewidth]{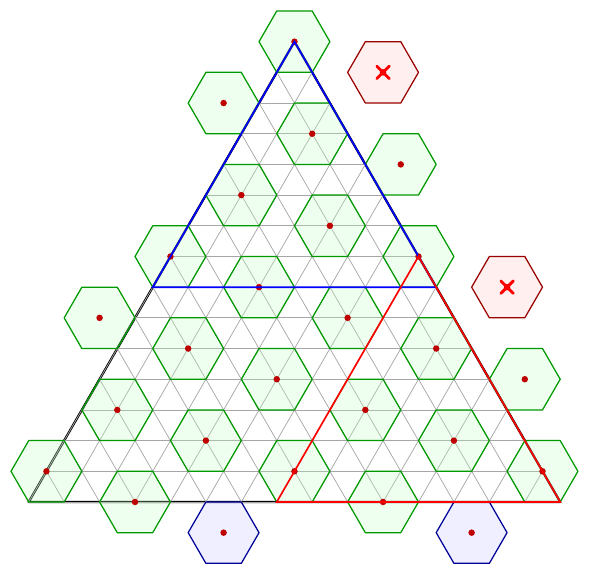}
        \caption{$\beta = 1$.}
    \end{subfigure}
    \hfill
    \begin{subfigure}{0.32\textwidth}
        \centering
        \includegraphics[width=\linewidth]{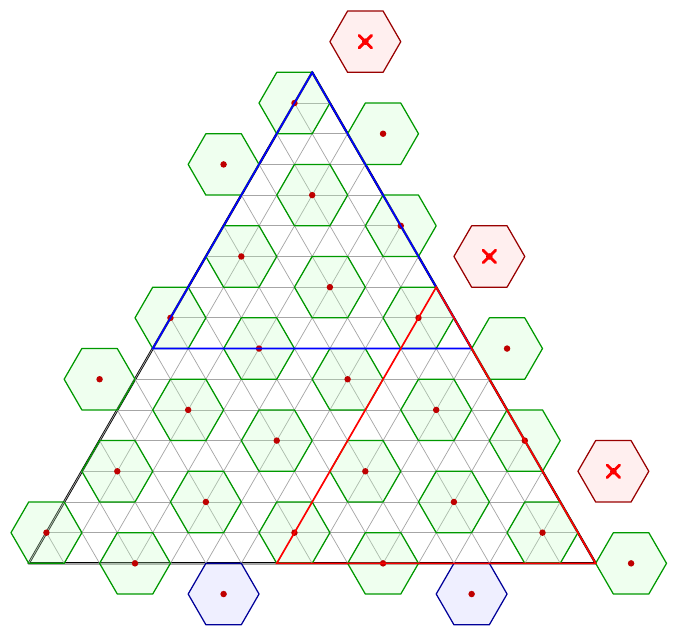}
        \caption{$\beta = 2$.}
    \end{subfigure}
    
    % beta 3 4
    \begin{subfigure}{0.48\textwidth}
        \centering
        \includegraphics[width=\linewidth]{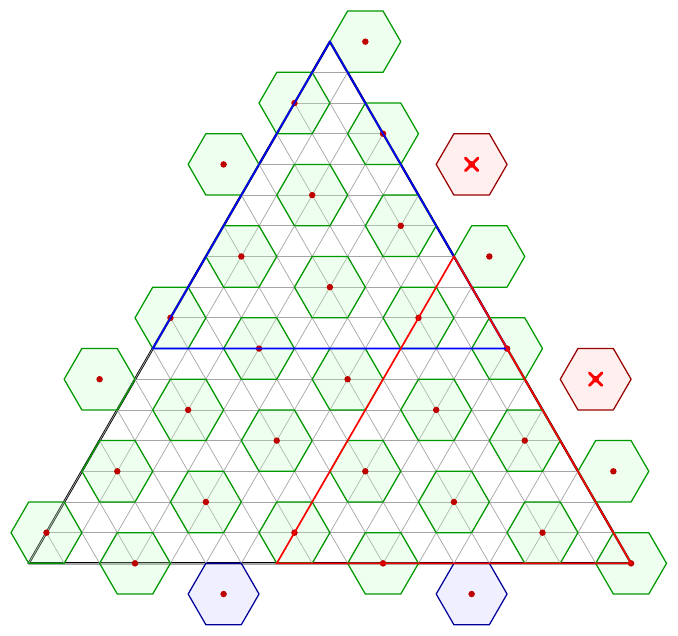}
        \caption{$\beta = 3$.}
    \end{subfigure}
    \hfill
    \begin{subfigure}{0.48\textwidth}
        \centering
        \includegraphics[width=\linewidth]{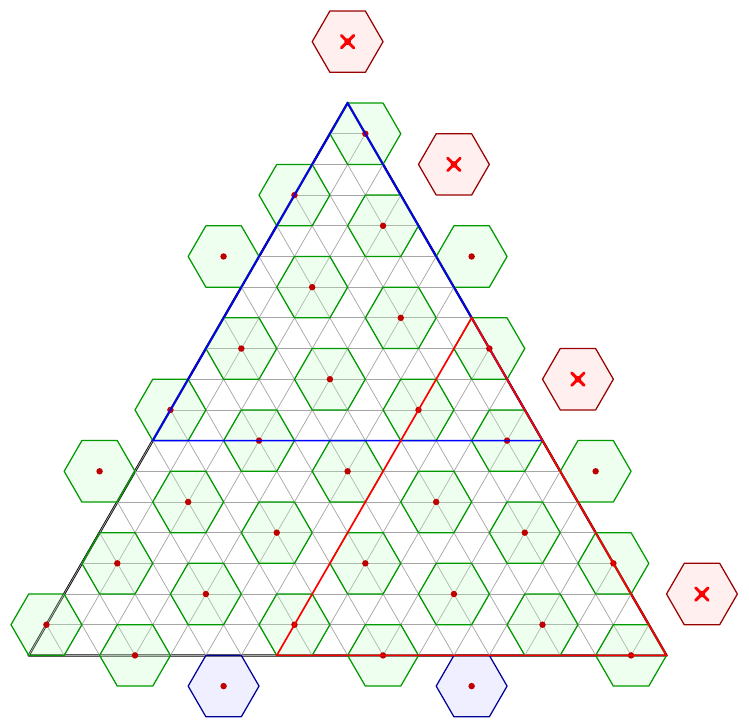}
        \caption{$\beta = 4$.}
    \end{subfigure}

    % beta 5 6
    \begin{subfigure}{0.48\textwidth}
        \centering
        \includegraphics[width=\linewidth]{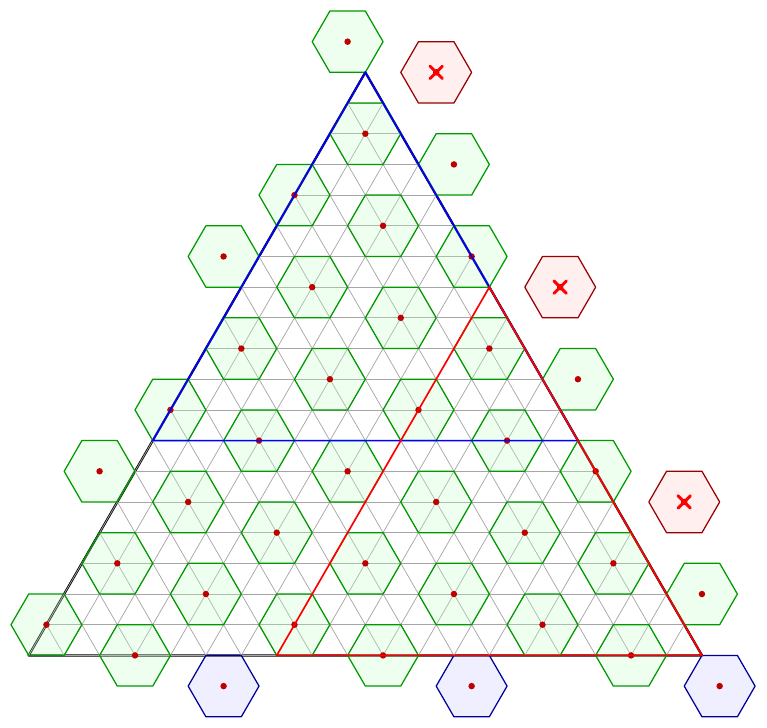}
        \caption{$\beta = 5$.}
    \end{subfigure}
    \hfill
    \begin{subfigure}{0.48\textwidth}
        \centering
        \includegraphics[width=\linewidth]{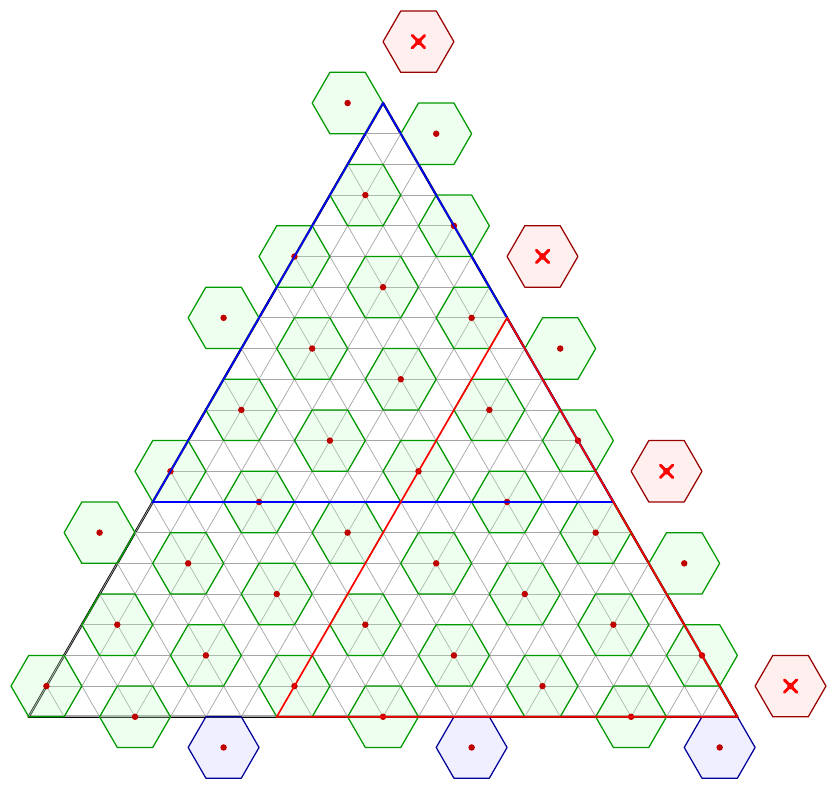}
        \caption{$\beta = 6$.}
    \end{subfigure}
    
    \caption{Phase~II (translation) applied to $T_{7q+\beta}$ for $q=2$ and $0 \le \beta \le 6$. All graphs are translated in the same direction within the infinite triangular grid while preserving the pattern $P_{2,1}$. The red vertices indicate dominating vertices that must be added after the translation, whereas the blue vertices indicate those that are no longer required.}
    \label{fig:teorema_espec:fase2}
\end{figure}

%%%%%%%%%% FASE 3

\paragraph{Phase III: Local optimization.}

In $T_{7q+\beta}$, denote by $T_{7q+\beta}^{U}$ the unique subgraph isomorphic to $T_{7+\beta}$ that contains the vertex at the intersection of the sides $\ell_y$ and $\ell_z$ of $T_{7q+\beta}$, and by $T_{7q+\beta}^{L}$ the unique subgraph isomorphic to $T_{7+\beta}$ that contains the vertex at the intersection of the sides $\ell_x$ and $\ell_y$ of $T_{7q+\beta}$.

In the final phase, we perform local optimizations inside one or both of the subgraphs $T_{7q+\beta}^{U}$ and $T_{7q+\beta}^{L}$ obtained from the translation. These optimizations consist of replacing certain subsets of dominating vertices of $P_{2,1}$ with smaller subsets that still dominate all vertices of $T_d$. More precisely, groups of $x$ dominating vertices are replaced by groups of $y$ vertices with $y<x$, without losing the domination property. The replacements depend only on $\beta$ and are preserved as $q$ increases.

The vertices removed and those added are indicated in Figure~\ref{fig:teorema_espec:fase3}. This yields the final estimate
\[
\gamma(T_d) \le 3.5q^2 + (\beta + 4.5)q + \beta - 1.
\]
This completes the proof of Theorem~\ref{teorema:upperbound_k1}.

\end{proof}

\begin{figure}[htbp]
    \centering
    
    % beta 0 1 2
    \begin{subfigure}{0.28\textwidth}
        \centering
        \includegraphics[width=\linewidth]{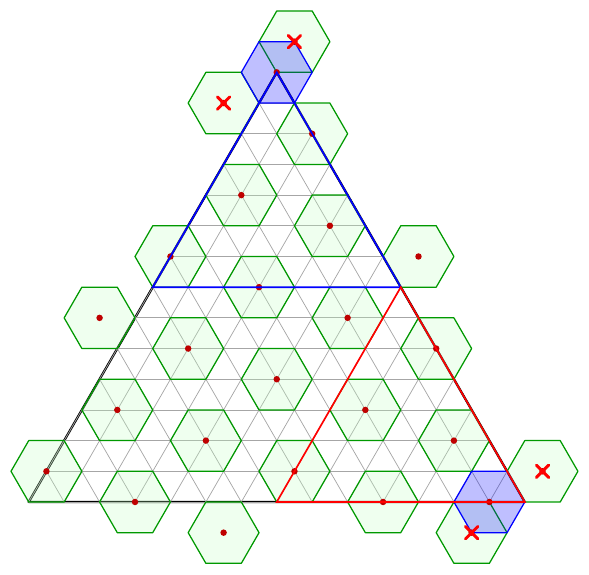}
        \caption{$\beta = 0$.}
    \end{subfigure}
    \hfill
    \begin{subfigure}{0.30\textwidth}
        \centering
        \includegraphics[width=\linewidth]{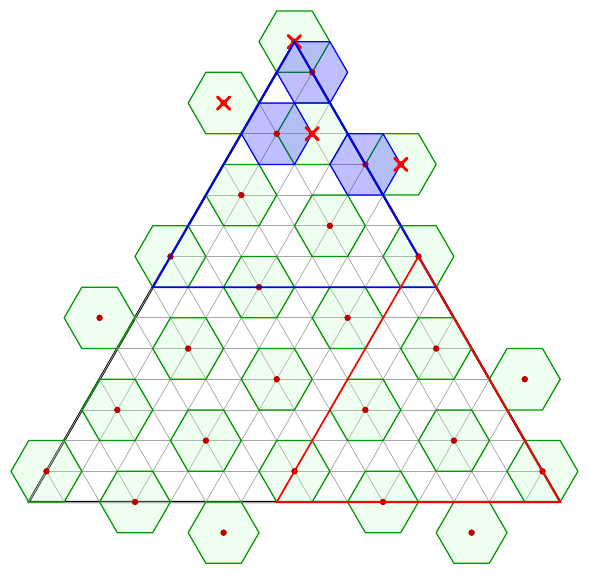}
        \caption{$\beta = 1$.}
    \end{subfigure}
    \hfill
    \begin{subfigure}{0.32\textwidth}
        \centering
        \includegraphics[width=\linewidth]{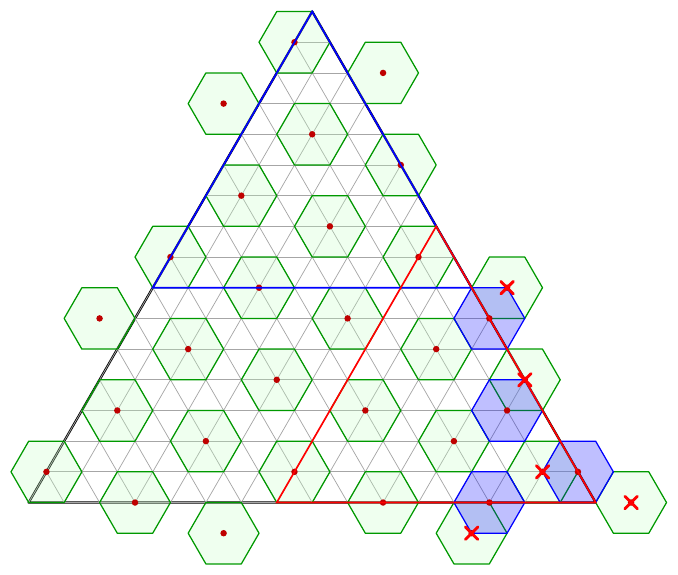}
        \caption{$\beta = 2$.}
    \end{subfigure}
    
    % beta 3 4
    \begin{subfigure}{0.48\textwidth}
        \centering
        \includegraphics[width=\linewidth]{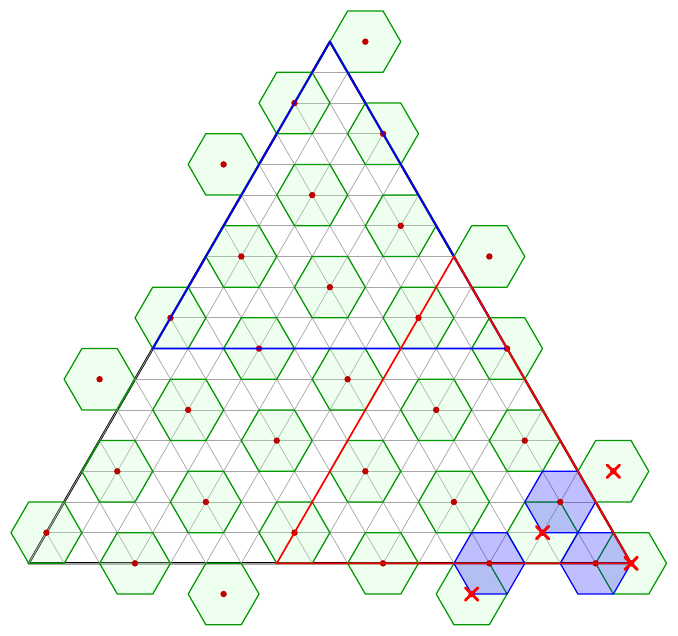}
        \caption{$\beta = 3$.}
    \end{subfigure}
    \hfill
    \begin{subfigure}{0.48\textwidth}
        \centering
        \includegraphics[width=\linewidth]{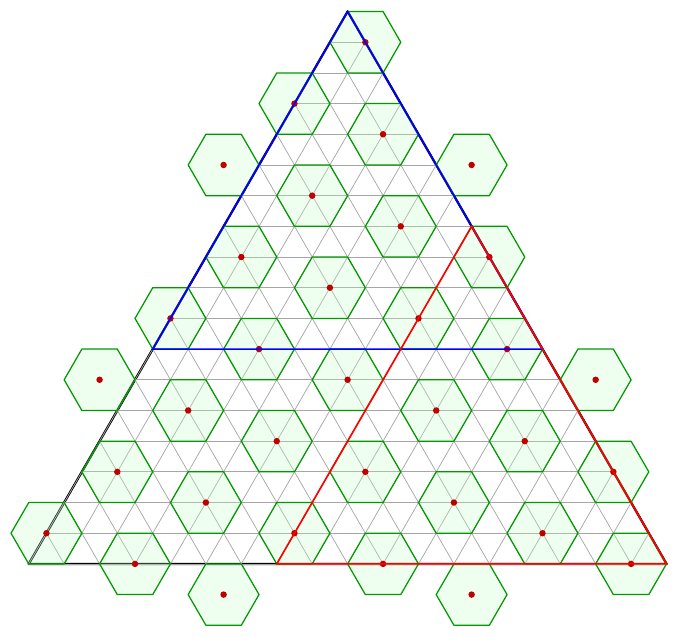}
        \caption{$\beta = 4$.}
    \end{subfigure}

    % beta 5 6
    \begin{subfigure}{0.48\textwidth}
        \centering
        \includegraphics[width=\linewidth]{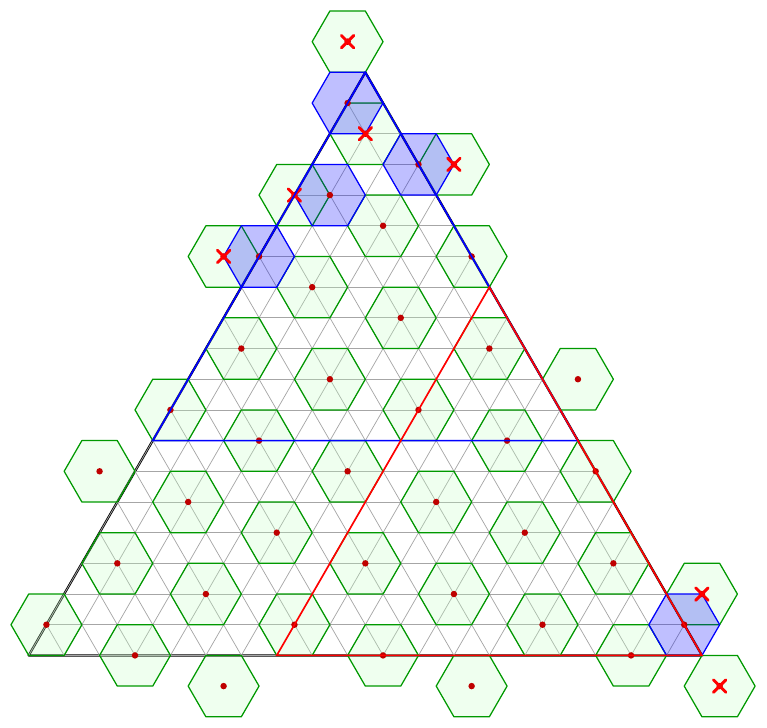}
        \caption{$\beta = 5$.}
    \end{subfigure}
    \hfill
    \begin{subfigure}{0.48\textwidth}
        \centering
        \includegraphics[width=\linewidth]{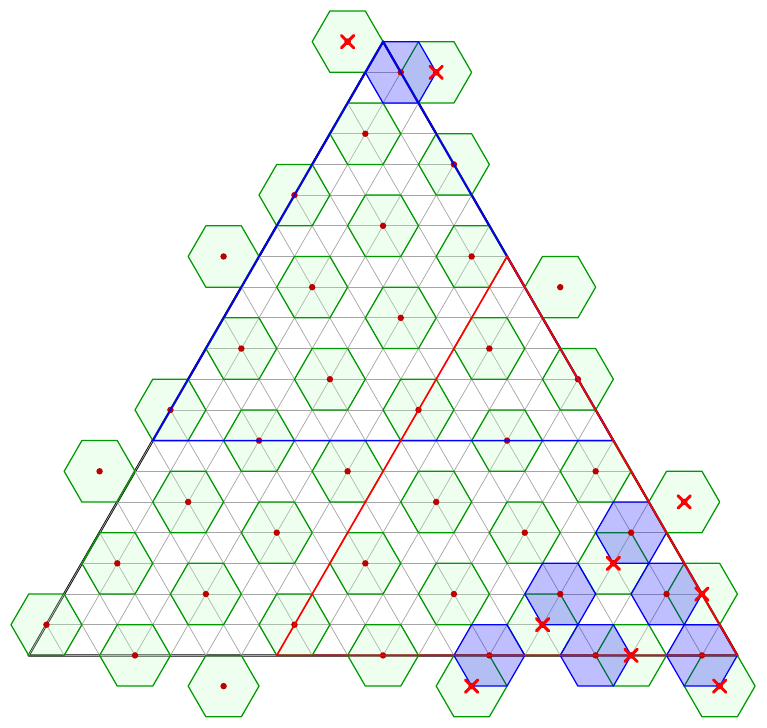}
        \caption{$\beta = 6$.}
    \end{subfigure}
    
    \caption{Phase~III (local optimization) applied to the translated graphs $T_{7q+\beta}$ for $0 \le \beta \le 6$. Certain dominating vertices of $P_{2,1}$ are replaced by smaller equivalent sets that still dominate the graph. The vertices marked with $\times$ are removed from the dominating set, while the shaded regions indicate the locations where the replacements occur.}
    \label{fig:teorema_espec:fase3}
\end{figure}

%% file: sections/4_radius/radius.tex
\section{Radius of triangular matchstick graphs}\label{sec:radius}

In this section, we determine the radius of $T_d$ and use it to obtain a threshold for which a single vertex distance-$k$ dominates the whole graph.

\radius*

\begin{proof}

It suffices to show that the following recurrence holds, since the sequence $\left\lceil 2d/3\right\rceil$ satisfies the same recurrence and the same initial values:
\[
r_d =
\begin{cases}
d & \text{if } d < 3,\\
r_{d-3}+2 & \text{if } d \ge 3.
\end{cases}
\]

We use coordinates $(x,y,z)$ for vertices of $T_d$ satisfying
$0 \le x,y,z \le d$ and~$y~=~x~+~z$, which uniquely identify each vertex (see also \cite{bose_power_2020}).
Adjacent vertices differ by $\pm1$ in exactly two coordinates and coincide in the third; consequently, each vertex has at most six neighbors arranged along three principal directions (Figure~\ref{fig:mm:lema1}). The corner vertices are $(0,0,0)$, $(d,d,0)$, and $(0,d,d)$, and each pair is at distance $d$. 
The three \emph{sides} of $T_d$ are the shortest paths between the corner vertices $(0,0,0)$, $(d,d,0)$, and $(0,d,d)$, denoted by $\ell_x$, $\ell_y$, and $\ell_z$. 
We call $(0,0,0)$ the \emph{origin}, and the side joining $(d,d,0)$ and $(0,d,d)$ the \emph{opposite side}.

\begin{figure}[htbp]
    \centering
    
    \begin{subfigure}{0.35\textwidth}
        \centering
        \includegraphics[width=\linewidth]{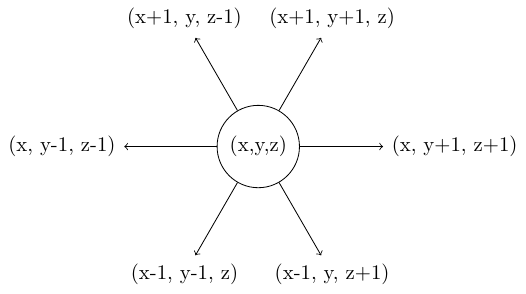}
        \caption{Coordinates of the neighbors of a vertex~$(x,y,z)$ of $T_d$.}
        \label{fig:mm:lema1}
    \end{subfigure}
    \hfill
    \begin{subfigure}{0.6\textwidth}
        \centering
        \includegraphics[width=0.75\linewidth]{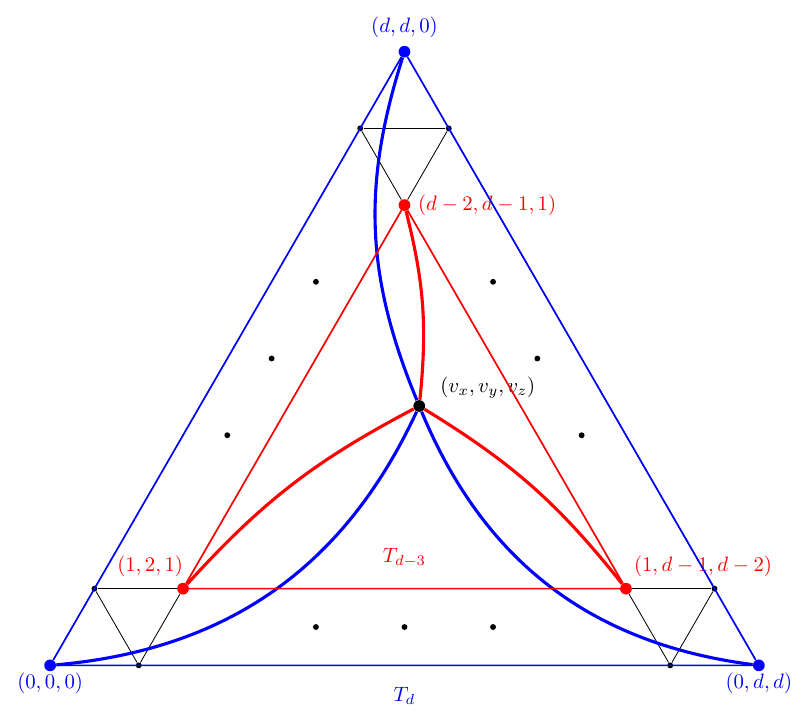}
        \caption{Construction of $T_d$ from $T_{d-3}$ by adding $3$ paths parallel to each of its sides.}
        \label{fig:mm:teo3:2}
    \end{subfigure}
    
    \caption{Coordinate system in $T_d$ and the construction of $T_d$ from $T_{d-3}$.}
    \label{fig:mm:coordsistema}
\end{figure}

\begin{claim}\label{claim:mindist}
If $u=(x_u,y_u,z_u)$ and $v=(x_v,y_v,z_v)$ are vertices of $T_d$, then
\[
dist(u,v)=\frac{|x_u-x_v|+|y_u-y_v|+|z_u-z_v|}{2}.
\]
\end{claim}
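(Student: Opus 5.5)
The plan is to prove the two inequalities separately: first a lower bound showing no path from $u$ to $v$ is shorter than the formula, then an explicit path achieving it. Throughout we use that each edge changes exactly two coordinates by $\pm1$ and fixes the third, and that $y=x+z$ for every vertex.

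\emph{Lower bound.} Define
\[
\Phi(u,v)=|x_u-x_v|+|y_u-y_v|+|z_u-z_v|.
\]
Moving one step along an edge changes each of two coordinates by exactly $1$, so $\Phi$ changes by at most $2$ per step. Since $\Phi(v,v)=0$, any path from $u$ to $v$ has at least $\Phi(u,v)/2$ edges. Hence $dist(u,v)\ge \Phi(u,v)/2$.

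\emph{Upper bound.} Write $\Delta x=x_v-x_u$, $\Delta y=y_v-y_u$, $\Delta z=z_v-z_u$, so that $\Delta y=\Delta x+\Delta z$.
\begin{itemize}
\item If $\Delta x$ and $\Delta z$ have the same sign (or one is zero), then $|\Delta y|=|\Delta x|+|\Delta z|$ and $\Phi/2=|\Delta x|+|\Delta z|$. The path takes $|\Delta x|$ steps changing $(x,y)$ together and $|\Delta z|$ steps changing $(z,y)$ together, all in the appropriate direction.
\item If $\Delta x$ and $\Delta z$ have opposite signs, say $\Delta x>0>\Delta z$, then $\Phi/2=\max(|\Delta x|,|\Delta z|)$. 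The path uses $\min(|\Delta x|,|\Delta z|)$ steps of the type $(x+1,z-1)$ with $y$ fixed. It then completes the remaining $||\Delta x|-|\Delta z||$ units with steps that change $y$ together with whichever of $x$ or $z$ still needs to move.
\end{itemize}
In both cases the path length equals $\Phi/2$.

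The main obstacle is checking that this path stays inside $T_d$, i.e.\ that every intermediate vertex satisfies $0\le x,y,z\le d$. The constraints $0\le x$ and $0\le z$ are harmless because each coordinate moves monotonically from its value at $u$ to its value at $v$. The constraint $y\le d$ (together with $y=x+z$) is handled by ordering the steps. When $y$ must decrease, we perform the $y$-decreasing steps first. When $y$ must increase, we perform them last, so $y$ is also monotone. With every coordinate monotone between two admissible endpoint values, every intermediate point lies in the convex region $T_d$, and being a lattice point, it is a vertex of $T_d$. Combining the two bounds gives the claimed equality.
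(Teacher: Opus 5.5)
Your proof is correct and follows the paper's approach. The lower bound is the same per-step triangle-inequality argument. Your explicit path is a concrete instance of the paper's step of repeatedly moving to a neighbor that lowers $\Phi$ by $2$, and your monotonicity check supplies the verification, which the paper leaves unjustified, that these moves stay inside $T_d$.

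One remark is unnecessary: the step-ordering is not needed. In each case every step changes $y$ either in the direction of $\Delta y$ or not at all, so $y$ is monotone for any ordering. Coordinatewise betweenness, together with the preserved relation $y=x+z$, already gives $0\le x,y,z\le d$, so the appeal to convexity is also not needed.
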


\begin{proof}
Let $P = u_0,\dots,u_m$ be a $u$--$v$ path with $u_0 = u$ and $u_m = v$, and define
\[
\phi_{ij}=|x_{u_j}-x_{u_i}|+|y_{u_j}-y_{u_i}|+|z_{u_j}-z_{u_i}|.
\]
Since adjacent vertices differ by $\pm1$ in two coordinates and coincide in the third,~
$\phi_{i(i+1)}~=~2$.
By the triangle inequality,
\[
\phi_{0m}\le\sum_{i=0}^{m-1}\phi_{i(i+1)}=2m,
\]
so every $u$--$v$ path has length at least $\frac{\phi_{0m}}{2}$.

For the reverse inequality, if $u\neq v$ there exists a neighbor $u'$ of $u$ such that
\[
|x_{u'}-x_v|+|y_{u'}-y_v|+|z_{u'}-z_v|
=
|x_u-x_v|+|y_u-y_v|+|z_u-z_v|-2.
\]
Iterating this construction yields a $u$--$v$ path of length $\frac{\phi_{0m}}{2}$.
\end{proof}

\begin{claim}\label{claim:ecc}
Let $u$ be a vertex of $T_d=(V,E)$, and let $v'$ be a corner vertex whose distance from $u$ is maximal. Then
$ecc(u) = dist(u,v')$.
\end{claim}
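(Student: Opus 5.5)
We prove Claim~\ref{claim:ecc} by showing that, for a fixed vertex $u$, the distance function $v\mapsto dist(u,v)$ on $T_d$ is maximized at a corner. By Claim~\ref{claim:mindist},
\[
dist(u,v)=\tfrac12\bigl(|x_u-x_v|+|y_u-y_v|+|z_u-z_v|\bigr).
\]
Parametrize $v$ by $(x_v,z_v)$ with $y_v=x_v+z_v$. The vertex set of $T_d$ is then the lattice points of the triangle $\{x\ge 0,\ z\ge 0,\ x+z\le d\}$. This region is the convex hull of the three corners $(0,0,0)$, $(d,d,0)$ and $(0,d,d)$.

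The key step is to observe that, for fixed $u$, the map
\[
f(x,z)=|x_u-x|+|x_u+z_u-x-z|+|z_u-z|
\]
is a sum of absolute values of affine functions, and hence convex on $\mathbb{R}^2$. A convex function on a compact convex polygon attains its maximum at a vertex of the polygon. Since the corners of $T_d$ are lattice points and belong to $V$, we get
\[
\max_{v\in V} dist(u,v)\le \max_{\text{corners } c} dist(u,c).
\]
The reverse inequality is trivial because the corners lie in $V$. Therefore $ecc(u)=dist(u,v')$ for any corner $v'$ that maximizes the distance from $u$.

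The argument has no genuine obstacle. The only care needed is to justify that maximizing over the lattice points gives no more than maximizing over the real triangle. This holds because every vertex is a point of the triangle, and the maximizing point of the real triangle is itself a vertex of $T_d$.

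If a purely combinatorial argument is preferred, we can instead show directly that from any non-corner vertex $v$ one can move to a neighbour inside $T_d$ without decreasing $f$, and then iterate until a corner is reached. Here $v$ lies either in the interior of the triangle or on a side. In each case, choose the lattice direction along which each absolute-value term is monotone non-decreasing, in the spirit of the neighbour step in the proof of Claim~\ref{claim:mindist}. The convexity argument above is shorter, so that is the one we would write.
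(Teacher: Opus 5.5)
Your proof is correct, but it takes a different route from the paper. The paper argues locally inside the graph. It asserts that every non-corner vertex $v$ has a neighbour $w$ with $f(w)\ge f(v)$, where $f(v)=2\,dist(u,v)$ by Claim~\ref{claim:mindist}. It then iterates until a corner is reached, invoking only the finiteness of $T_d$.

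You argue globally instead. You extend $f$ to the real triangle, where it is convex as a sum of absolute values of affine functions. Writing any point as a convex combination $\sum_i\lambda_i c_i$ of the corners then gives $f\le\max_i f(c_i)$ at once.

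Your version buys rigour without case analysis. The paper's sketch does not justify why the non-decreasing neighbour exists. Also, because the inequality is non-strict, finiteness alone does not stop the walk from oscillating among non-corner vertices. Repairing this requires moving consistently along a lattice line, which is exactly one-dimensional convexity of $f$, i.e.\ your argument in local form. The paper's version has the merit of staying purely combinatorial and paralleling the neighbour step in Claim~\ref{claim:mindist}.

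One caution about the combinatorial fallback you sketch: as worded, it fails on the sides. Take $v$ on the side $z=0$ with $x_u<x_v<x_u+z_u$. No admissible direction keeps every absolute-value term non-decreasing. The two moves along the side trade one unit of $|x_u-x|$ for one unit of $|y_u-y|$, and both moves into the interior strictly decrease $f$. That variant would therefore have to be phrased in terms of the total $f$ together with a termination argument. Since you commit to the convexity proof, this does not affect your main argument.
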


\begin{proof}
Let $f(x,y,z)=|x_u-x|+|y_u-y|+|z_u-z|$.
If $v$ is not a corner vertex, then $v$ has a neighbor $w$ such that
$f(w)\ge f(v)$. Since $T_d$ is finite, iterating this argument produces a corner vertex~$v'$
with $f(v')\ge f(v)$.
\end{proof}

We now continue with the proof of Theorem \ref{teo:radio}.
We proceed by induction on $d$. The cases $d\le2$ follow by inspection.
Assume $d\ge3$.
Figure~\ref{fig:mm:teo3:2} illustrates the construction of $T_d$ from $T_{d-3}$ by adding three outer strips, one parallel to each side.
Let $c_1,c_2,c_3$ be the corner vertices of $T_d$ and
$c_1',c_2',c_3'$ the corresponding corners of $T_{d-3}$.
For $v\in T_{d-3}$ we will prove that
\[
ecc(v)=ecc'(v)+2,
\]
where $ecc'$ denotes eccentricity in $T_{d-3}$.

By Claim~\ref{claim:ecc}, it suffices to show
that
$dist(v,c_i)=dist(v,c_i')+2.
$
We verify this for $c_1=(0,0,0)$ and $c_1'=(1,2,1)$; the other cases
are analogous.
By Claim~\ref{claim:mindist},
$
dist(v,c_1)=\frac{|v_x|+|v_y|+|v_z|}{2}
$
and
$
dist(v,c_1')=\frac{|v_x-1|+|v_y-2|+|v_z-1|}{2}.
$

Since $v\in T_{d-3}$ implies $v_x,v_z\ge1$ and $v_y=v_x+v_z\ge2$,
we obtain $|v_x-1|=v_x-1$, $|v_y-2|=v_y-2$, and $|v_z-1|=v_z-1$, so
$
dist(v,c_1')=dist(v,c_1)-2
$
and therefore
$
ecc(v)=ecc'(v)+2.
$
Moreover, Figure~\ref{fig:mm:teo3:2} shows that every vertex of $T_d$ outside $T_{d-3}$ lies in one of the three outer strips added around $T_{d-3}$, and therefore is farther from the corresponding opposite corner than some vertex of $T_{d-3}$, so such vertices cannot be central.

Let $u$ be a central vertex of $T_{d-3}$ with $ecc'(u)=r_{d-3}$.
Then
$
ecc(u)=r_{d-3}+2,
$
and, by the preceding argument, no vertex outside the internal copy of $T_{d-3}$ has eccentricity less than $r_{d-3}+2$. Therefore, $u$ remains central.
Hence
$
r_d=r_{d-3}+2,
$
completing the proof.
\end{proof}

\input{sections/4_radius/radius_coro}

%% file: sections/4_radius/radius_coro.tex
As an immediate consequence of Theorem~\ref{teo:radio}, if $u$ is a central vertex of $T_d$, then
$
ecc(u)=r_d=\left\lceil \frac{2d}{3} \right\rceil,
$
and hence, for every $k\ge r_d$, the singleton set $\{u\}$ is a distance-$k$ dominating set of $T_d$. Therefore, we have the following corollary.

\begin{corollary}\label{coro:radio}
Let $d \ge 0$. Then $\gamma_k(T_d)=1$ for every
$k \ge \left\lceil \frac{2d}{3} \right\rceil.
$
\end{corollary}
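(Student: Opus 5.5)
The corollary follows directly from Theorem~\ref{teo:radio} together with the definition of eccentricity, so the plan is short.

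First, I will fix $d\ge 0$ and choose a central vertex $u$ of $T_d$, which exists because $T_d$ is finite and connected. By definition, $u$ has $ecc(u)=r_d$, and Theorem~\ref{teo:radio} gives $r_d=\lceil 2d/3\rceil$. So every vertex $v$ of $T_d$ satisfies $dist(u,v)\le ecc(u)=\lceil 2d/3\rceil$.

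Next, I take any $k\ge\lceil 2d/3\rceil$. Then $dist(u,v)\le k$ for all $v$, so $\{u\}$ is a distance-$k$ dominating set and $\gamma_k(T_d)\le 1$. For the reverse inequality, $T_d$ has at least one vertex, so the empty set dominates nothing and $\gamma_k(T_d)\ge 1$. Together these give $\gamma_k(T_d)=1$.

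There is no real obstacle here, since all the work is in Theorem~\ref{teo:radio}. The only care needed is the edge case $d=0$. There $T_0$ is a single vertex and $\lceil 0\rceil=0$, so the hypothesis allows every $k\ge 1$ (the paper assumes $k\ge1$ throughout). The argument still applies, because the unique vertex dominates itself at distance $0$.
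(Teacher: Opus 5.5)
Your proposal is correct and follows the paper's argument: take a central vertex $u$, use Theorem~\ref{teo:radio} to get $ecc(u)=r_d=\lceil 2d/3\rceil$, and conclude that $\{u\}$ is a distance-$k$ dominating set for every $k\ge r_d$. Your explicit lower bound $\gamma_k(T_d)\ge 1$, which holds because $T_d$ is nonempty, is a harmless addition the paper leaves implicit. Also, for $d=0$ the hypothesis permits $k=0$ as well, and the same argument still works there.
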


Corollary \ref{coro:radio} was already proven in~\cite[Lemma~3.2]{harris_2020}, where it is expressed as
$
d \le \left\lfloor \frac{3k}{2} \right\rfloor
$
in the context of $(t,1)$-broadcast domination, with $t=k+1$. However, their proof is based on a geometric argument describing the largest triangular subgraph that fits inside a broadcast hexagon of radius $t-1$. In contrast, our formulation follows immediately from the radius of $T_d$.